\newenvironment{proof}[1][Proof]{\noindent\textit{#1.} }{\hfill  \rule{0.5em}{0.5em}}
\newtheorem{theorem}{Theorem}
\newtheorem{lm}{Lemma}
\newtheorem{thm}[theorem]{Theorem}
\newtheorem{pro}{Proposition}
\newtheorem{exmp}{Example}
\newtheorem{rmk}{Remark}
\newtheorem{coro}{Corollary}
\begin{document}

\title{Continuous solutions of an iterative equation \\
with multiplication
}

\author{
Chaitanya Gopalakrishna\,$^a$,~~
Murugan Veerapazham\,$^a$,
\vspace{3mm}\\
Suyun Wang\,$^b$,~~
Weinian Zhang\,$^c$
\vspace{3mm}\\
$^a${\small Department of Mathematical and Computational Sciences,}
\\
{\small National Institute of Technology Karnataka,}
\\
{\small  Surathkal, Mangalore-575 025, India}
\\
$^b${\small School of Mathematics, Lanzhou City University,}
\\
{\small 11 Jiefang Road, Lanzhou, Gansu 730070, P. R. China}
\\
$^c${\small School of Mathematics, Sichuan University,}
\\
{\small Chengdu, Sichuan 610064, P. R. China}

 \vspace{0.2cm}\\
{\small cberbalaje@gmail.com (CG),~~murugan@nitk.edu.in (MV),}\\
{\small wangsy1970@163.com (SW),~~matzwn@126.com (WZ).}       
   
}

\date{}


\maketitle

\begin{abstract}
Iterative equation is an equality with an unknown function and its iterates. 
There were not found a result on iterative equations with multiplication of iterates of the unknown function on $\mathbb{R}$.
In this paper we use an exponential function
to reduce the equation in conjugation to the well-known form of polynomial-like iterative equation, but we encountered
two difficulties: the reduction restricts our discussion of the equation to $\mathbb{R}_+$; 
the reduced polynomial-like iterative equation is defined on the whole $\mathbb{R}$ but
known results were given on comapct intervals.
We revisit the  polynomial-like iterative equation on the whole $\mathbb{R}$ and 
give existence, uniqueness, stability and construction of continuous solutions of our equation on $\mathbb{R}_+$.
Then we technically extend our solutions from $\mathbb{R}_+$ to $\mathbb{R}_-$.

\vskip 0.2cm

{\bf Keywords:}
functional equation; iteration; nonlinear combination; contraction principle.
\vskip 0.2cm

{\bf MSC(2010):}
primary 39B12; secondary 47J05.
\end{abstract}




\baselineskip 16pt
\parskip 10pt


\section{Introduction}

Consider a map $f: E\to E$ on a nonempty set $E$. Its $n$-th order iterates, denoted by
$f^n$,
is defined recursively by $f^0={\rm id}$, the identity map on $E$,  and $f^{n+1}=f\circ f^n$.
Being an important operation in the present era of informatics, iteration gets more and more attractive to researchers
and
attentions (see the book \cite{Kuczma} and the survey \cite{Baron-Jarczyk} for example)
were paid to those functional equations involving iteration, called
{\it iterative equations}.
The general form of such equations can be presented as
$$
\Phi(f(x), f^2(x),..., f^n(x))=F(x),
$$
where $F$ and $\Phi$ are given functions and $f$ is unknown.
Some special cases of this equation,
for example,
iterative root problem (\cite{Kuczma1968,Targonski}), dynamics of a quadratic mapping (\cite{greenfield})
 and Feigenbaum's equation related to period doubling bifurcations (\cite{Mc}),
are interesting topics in dynamical systems.

Although there can be found several papers \cite{Mu-Su, Si,Wang-Si} on the general Lipschitzian $\Phi$,
more efforts were still made to the basic form
\begin{eqnarray}\label{3}
a_1f(x)+\cdots+ a_n f^n(x)= F(x)
\end{eqnarray}
with $\Phi$ in a linear combination, called the polynomial-like iterative equation,
for more concrete properties.
Continuous solutions, differentiable solutions, convex solutions and decreasing solutions, and equivariant solutions
of \eqref{3} are discussed in \cite{Xu-Zhang,zhang1990, Bing-Weinian, Zhang-Edinb} respectively.
It is also interesting to discuss $\Phi$ of nonlinear combination, but
no results on nonlinear combination are found except for \cite{Zdun-Zhang}
on the unit circle.

In this paper, we consider the iterative equation with multiplication
\begin{eqnarray}
\begin{array}{ll}
(g(x))^{\alpha_1} (g^{2}(x))^{\alpha_2}\cdots(g^{n}(x))^{\alpha_n} =G(x),
\end{array}
\label{1}
\end{eqnarray}
i.e., $\Phi(u_1, u_2, \ldots,u_n)=\prod_{k=1}^{n}u_k^{\alpha_k}$,
where $G$ is given and $g$ is unknown.
Unlike those \cite{Mu-Su, Si,Wang-Si,Si-Zhang,zhang1988,zhang1989} on compact intervals,
our work to (\ref{1}) is concentrated to solving \eqref{3} on the whole $\mathbb{R}$.
Our strategy is to restrict our discussion of \eqref{1} on $\mathbb{R}_+:=(0,+\infty)$
and use an exponential function
to reduce in conjugation to the well-known form of polynomial-like iterative equation (\ref{3}) on the whole $\mathbb{R}$.
Note that all found results on the polynomial-like iterative equation
are given either on a compact interval or near a fixed point, none of which
can be applied to our case.
In this paper we generally discuss a polynomial-like iterative equation on the whole $\mathbb{R}$
and use obtained result to give solutions of equation (\ref{1}) on $\mathbb{R}_+$ and $\mathbb{R}_-:=(-\infty, 0)$. Our approach here is twofold.
First, using the Banach contraction principle, we give sufficient conditions for
existence and uniqueness of solutions for \eqref{1}. We also prove that the obtained solution depends on $G$ continuously.
Then, using the second method, we construct its solutions sewing piece by piece as done in \cite{Xu-Zhang,Zhang-Xu-Zhang}.


\section{Preliminaries}

Let
$\mathcal{C}_b(\mathbb{R})$ (resp. $\mathcal{C}_b(\mathbb{R}_+)$) consist of all bounded continuous self-maps of $\mathbb{R}$ (resp. $\mathbb{R}_+$). Then $\mathcal{C}_b(\mathbb{R})$
is a Banach space with the uniform norm $\|\cdot\|$, defined by
$$\|f\|:=\sup \{|f(x)|: x\in \mathbb{R}\}.$$
Consider $g$ on $(0,+\infty)$. We can use the exponential map $\psi(x)=e^x$ to conjugate $g$ to a map on the whole $\mathbb{R}$, i.e.,
let $f(x):=\log g(e^x)$, a map from $\mathbb{R}$ into $\mathbb{R}$
(one-to-one if $g$ is one-to-one),
and reduces equation (\ref{1}) to the polynomial-like one
\begin{eqnarray}
\alpha_1 f(x) +\alpha_2 f^2(x)+\cdots +\alpha_nf^n(x)= F(x), ~~~~~~    x\in \mathbb{R},
\label{(*)}
\end{eqnarray}
where $F(x):=\log G(e^x)$.

\begin{pro}\label{P3}
The map $g$ is a solution (resp. unique solution) of \eqref{1} in $\mathcal{X}\subseteq \mathcal{C}_b(\mathbb{R}_+)$
if and only if $f(x):=\psi^{-1}(g(\psi(x)))$
is a solution (resp. unique solution) of \eqref{(*)} in $\mathcal{Y}\subseteq \mathcal{C}_b(\mathbb{R})$, where $\psi(x)=e^x$ and $\mathcal{Y}=\{\psi^{-1}\circ g\circ \psi: g\in \mathcal{X}\}$.
\end{pro}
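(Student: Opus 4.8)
The plan is to verify directly that the conjugation by $\psi(x)=e^x$ intertwines equation \eqref{1} on $\mathbb{R}_+$ with equation \eqref{(*)} on $\mathbb{R}$, and that it sets up a bijection between the relevant function classes under which "being a solution" is preserved in both directions. First I would record the elementary but essential facts about $\psi$: it is a homeomorphism from $\mathbb{R}$ onto $\mathbb{R}_+$ with inverse $\psi^{-1}=\log$, so that $g\mapsto \psi^{-1}\circ g\circ\psi$ is a well-defined bijection from $\mathcal{C}_b(\mathbb{R}_+)$... wait, one must be slightly careful here, since $\psi^{-1}\circ g\circ\psi$ need not be bounded even if $g$ is; but the statement only asserts the equivalence on prescribed subclasses $\mathcal{X}$ and $\mathcal{Y}=\{\psi^{-1}\circ g\circ\psi: g\in\mathcal{X}\}$, so the map $g\mapsto f:=\psi^{-1}\circ g\circ\psi$ is by construction a bijection from $\mathcal{X}$ onto $\mathcal{Y}$ with inverse $f\mapsto \psi\circ f\circ\psi^{-1}$.

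The key computational step is the iterate identity: by induction on $k$, $f^k = \psi^{-1}\circ g^k\circ\psi$, i.e. $g^k(\psi(x)) = \psi(f^k(x))$ for all $x\in\mathbb{R}$ and all $k\ge 0$. The base case $k=0$ is the identity, and the inductive step uses $g^{k+1}\circ\psi = g\circ g^k\circ\psi = g\circ\psi\circ f^k = \psi\circ f\circ f^k = \psi\circ f^{k+1}$. Granting this, I would evaluate the left-hand side of \eqref{1} at $x=\psi(t)=e^t$: since $g^k(e^t) = \psi(f^k(t)) = e^{f^k(t)}$, the product $\prod_{k=1}^n (g^k(e^t))^{\alpha_k}$ equals $\exp\!\big(\sum_{k=1}^n \alpha_k f^k(t)\big)$. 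On the other hand $G(e^t) = \exp(F(t))$ by the definition $F(x):=\log G(e^x)$. Taking logarithms, \eqref{1} holds at every point of $\mathbb{R}_+$ if and only if $\sum_{k=1}^n \alpha_k f^k(t) = F(t)$ holds at every point of $\mathbb{R}$, which is exactly \eqref{(*)}; the surjectivity of $\psi$ onto $\mathbb{R}_+$ guarantees that ranging $x$ over $\mathbb{R}_+$ is the same as ranging $t$ over $\mathbb{R}$, and injectivity of $\log$ makes the logarithm step reversible.

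Finally I would assemble the two directions. If $g\in\mathcal{X}$ solves \eqref{1}, the above shows $f=\psi^{-1}\circ g\circ\psi\in\mathcal{Y}$ solves \eqref{(*)}; conversely, if $f\in\mathcal{Y}$ solves \eqref{(*)}, then $g=\psi\circ f\circ\psi^{-1}\in\mathcal{X}$ solves \eqref{1}, and these two assignments are mutually inverse. Hence solutions of \eqref{1} in $\mathcal{X}$ correspond bijectively to solutions of \eqref{(*)} in $\mathcal{Y}$, which immediately upgrades "solution" to "unique solution": $g$ is the only solution in $\mathcal{X}$ precisely when its image $f$ is the only solution in $\mathcal{Y}$, because any competitor for one would produce a competitor for the other via the bijection. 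I do not expect any genuine obstacle here; the only point requiring a little care is to phrase everything relative to the subclasses $\mathcal{X}$ and $\mathcal{Y}$ (rather than all of $\mathcal{C}_b$), since the conjugation need not preserve boundedness, and to note that continuity of $f$ follows from continuity of $g$ together with continuity of $\psi$ and $\psi^{-1}$, so $\mathcal{Y}$ indeed sits inside $\mathcal{C}_b(\mathbb{R})$ whenever it is declared to.
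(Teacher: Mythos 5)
Your proposal is correct and follows essentially the same route as the paper: conjugating by $\psi(x)=e^x$, using the iterate identity $f^k=\psi^{-1}\circ g^k\circ\psi$, and taking logarithms of the product to pass between \eqref{1} and \eqref{(*)}, with uniqueness transferred through the bijection $g\mapsto\psi^{-1}\circ g\circ\psi$ between $\mathcal{X}$ and $\mathcal{Y}$. Your remark that boundedness need not be preserved under this conjugation is a fair observation that the paper's proof glosses over, but it does not alter the argument.
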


\begin{proof}
	Let $g$ be a solution of \eqref{1} in $\mathcal{X}$. Since $\psi$ is a homeomorphism of $\mathbb{R}$ onto $\mathbb{R}_+$, clearly $\mathcal{Y} \subseteq \mathcal{C}_b(\mathbb{R})$ and $f\in \mathcal{Y}$.  Also, for each $x\in \mathbb{R}$, we have
	\begin{eqnarray*}
	\sum_{k=1}^{n}\alpha_k f^k(x)&=&\sum_{k=1}^{n}\alpha_k \log g^k(e^x)\\
	&=& \log\left(\prod_{k=1}^{n}(g^k(e^x))^{\alpha_k}\right) \\
	&=&\log G(e^x)=F(x),
	\end{eqnarray*}
implying that $f$ is a solution of \eqref{(*)} on $\mathbb{R}$.  The converse follows similarly. Now, in order to prove the uniqueness, assume that \eqref{1} has a unique solution in $\mathcal{X}$ and
suppose that $f_1, f_2$ are any two solutions of \eqref{(*)} in $\mathcal{Y}$. Then, by ``if'' part of what we have proved above, there exist solutions  $g_1$ and $g_2$ of \eqref{1} in $\mathcal{X}$ such that $f_1=\psi^{-1}\circ g_1\circ \psi$ and $f_2=\psi^{-1}\circ g_2\circ \psi$. By our assumption, we have $g_1=g_2$ and therefore $f_1=f_2$. The proof of converse is similar.
\end{proof}


Consider $g$ on $(-\infty,0)$, we have the following.

\begin{pro}\label{P4}
Let
$\alpha_k\in \mathbb{Z}$ for $1\le k\le n$ such that $\sum_{k=1}^{n}\alpha_k$ is odd.
Then the map $g$ is a solution (resp. unique solution) of \eqref{1} in $\mathcal{X}\subseteq \mathcal{C}_b(\mathbb{R}_-)$
if and only if $h(x):=\psi^{-1}(g(\psi(x)))$
is a solution (resp. unique solution) of the equation
\begin{eqnarray}\label{15}
(h(x))^{\alpha_1} (h^{2}(x))^{\alpha_2}\cdots(h^{n}(x))^{\alpha_n}=H(x)
\end{eqnarray}
in  $\mathcal{Y}\subseteq \mathcal{C}_b(\mathbb{R}_+)$,
where $\psi(x)=-x$, $H(x)=\psi^{-1}(G(\psi(x)))$ and $\mathcal{Y}=\{\psi^{-1}\circ g\circ \psi: g\in \mathcal{X}\}$.
\end{pro}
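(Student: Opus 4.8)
The plan is to follow the template of Proposition~\ref{P3}, the only new feature being the sign arithmetic forced by the hypothesis $\alpha_k\in\mathbb{Z}$ with $\sum_k\alpha_k$ odd. I would first record that $\psi(x)=-x$ is an involutive homeomorphism with $\psi(\mathbb{R}_+)=\mathbb{R}_-$ and $\psi^{-1}=\psi$, and that $h=\psi^{-1}\circ g\circ\psi$ entails, by a routine induction on $k$ (using $\psi\circ\psi^{-1}=\mathrm{id}$), the conjugacy identity $h^k=\psi^{-1}\circ g^k\circ\psi$, i.e.\ $h^k(x)=-g^k(-x)$ for all $k\ge 0$ and $x\in\mathbb{R}_+$. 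From this, continuity of $h$ is immediate, $h$ is bounded because $g$ is and $\psi^{-1}$ is an isometry, and $h(\mathbb{R}_+)\subseteq\mathbb{R}_+$ because $g(\mathbb{R}_-)\subseteq\mathbb{R}_-$; hence $h\in\mathcal{Y}\subseteq\mathcal{C}_b(\mathbb{R}_+)$, and symmetrically $g=\psi\circ h\circ\psi\in\mathcal{C}_b(\mathbb{R}_-)$ whenever $h\in\mathcal{C}_b(\mathbb{R}_+)$.

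The computational core is then the chain, valid for $x\in\mathbb{R}_+$,
\[
\prod_{k=1}^{n}\bigl(h^k(x)\bigr)^{\alpha_k}
=\prod_{k=1}^{n}\bigl(-g^k(-x)\bigr)^{\alpha_k}
=(-1)^{\sum_{k=1}^{n}\alpha_k}\prod_{k=1}^{n}\bigl(g^k(-x)\bigr)^{\alpha_k}
=-\prod_{k=1}^{n}\bigl(g^k(-x)\bigr)^{\alpha_k},
\]
where the middle equality uses $\alpha_k\in\mathbb{Z}$ together with $g^k(-x)\in\mathbb{R}_-\setminus\{0\}$, so that $(-a)^{\alpha_k}=(-1)^{\alpha_k}a^{\alpha_k}$ is a legitimate identity even for negative exponents, and the last equality uses that $\sum_k\alpha_k$ is odd -- this is precisely where both hypotheses on the $\alpha_k$ enter. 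If $g$ solves \eqref{1} on $\mathbb{R}_-$, the last product equals $G(-x)$, so the left side equals $-G(-x)=\psi^{-1}(G(\psi(x)))=H(x)$, i.e.\ $h$ solves \eqref{15}; conversely, starting from $g^k(x)=-h^k(-x)$ for $x\in\mathbb{R}_-$ and running the same equalities backwards (noting $\psi(-x)=x$), a solution $h$ of \eqref{15} yields $\prod_k\bigl(g^k(x)\bigr)^{\alpha_k}=-H(-x)=-\psi^{-1}(G(x))=G(x)$, so $g$ solves \eqref{1}.

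Finally the uniqueness clause follows exactly as in Proposition~\ref{P3}: if \eqref{1} has a unique solution in $\mathcal{X}$ and $h_1,h_2\in\mathcal{Y}$ both solve \eqref{15}, the ``if'' part just proved produces $g_1,g_2\in\mathcal{X}$ solving \eqref{1} with $h_i=\psi^{-1}\circ g_i\circ\psi$, so $g_1=g_2$ and hence $h_1=h_2$; the reverse implication is symmetric. I do not expect a genuine obstacle in carrying this out -- the substance is just the conjugacy bookkeeping -- but the one point demanding care is the sign analysis: one must check that every iterate $g^k(x)$ (resp.\ $h^k(x)$) stays strictly away from $0$ so that the integer-power identity applies, and observe that the oddness of $\sum_k\alpha_k$ is exactly what makes $H$ map $\mathbb{R}_+$ into $\mathbb{R}_+$ and $h$ a genuine self-map there, so that \eqref{15} is again an instance of the same type of equation to which the results for $\mathbb{R}_+$ apply.
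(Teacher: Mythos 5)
Your proposal is correct and follows essentially the same route as the paper's own proof: conjugation by the involution $\psi(x)=-x$, the identity $h^k(x)=-g^k(-x)$, the sign computation $(-1)^{\sum_k\alpha_k}=-1$ from the oddness hypothesis, and a uniqueness argument mirroring that of Proposition~\ref{P3}. The extra care you take with the integer-power identity and the non-vanishing of the iterates is a welcome refinement of the same argument, not a different one.
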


\begin{proof}
Let $g$ be a solution of \eqref{1} in $\mathcal{X}$.
Since $\psi$ is a homeomorphism of $\mathbb{R}_+$ onto $\mathbb{R}_-$, clearly $\mathcal{Y}\subseteq \mathcal{C}_b(\mathbb{R}_+)$  and $h\in \mathcal{Y}$.
Also, for each $x\in \mathbb{R}_+$ and $k\in \{1,2, \ldots, n\}$, we have $H(x)=-G(-x)$ and $h^k(x)=-g^k(-x)$. Therefore,
\begin{eqnarray*}
\prod_{k=1}^{n}(h^{k}(x))^{\alpha_k}
&=& \prod_{k=1}^{n}(-g^k(-x))^{\alpha_k}
\\
&=&(-1)^{\sum_{k=1}^{n}\alpha_k}\prod_{k=1}^{n} (g^k(-x))^{\alpha_k}
\\
	&=&-\prod_{k=1}^{n} (g^k(-x))^{\alpha_k}
\\
	&=&-G(-x)=H(x)
	\end{eqnarray*}
since
$\sum_{k=1}^{n}\alpha_k$ is odd,
implying that $h$ is a solution of \eqref{15} on $\mathbb{R}_+$.
The converse follows similarly. Further, the proof of uniqueness is similar to that of Proposition \ref{P3}.
\end{proof}

By Proposition \ref{P3}, it suffices to prove existence for \eqref{(*)} on the whole $\mathbb{R}$
in order to prove the existence of solution for \eqref{1} on $\mathbb{R}_+$.
Further, in order to extend the solutions from $\mathbb{R}_+$ to its closure,
we require the continuity of $g$ and $G$ at $0$, i.e., we require the necessary conditions $\lim_{x\to -\infty} F(x)=\log G(0)$,
$\lim_{x\to -\infty} f^k(x)=\log g^k(0)$ for all
$1\le k\le n$ and
$$
\sum_{k=1}^{n}\alpha_k \log g^k(0)= \log G(0).
$$
These conditions can indeed be satisfied
if $\sum_{k=1}^{n}\alpha_k=1$, $G|_{\mathcal{R}(G)}={\rm id}$, $0\notin \mathcal{R}(G)$, and $G\equiv g$ on $[0,+\infty)$.

Let $I:=[a,b]$ and $J:=[c,d]$ be compact intervals in $\mathbb{R}$ and $\mathbb{R}_+$ respectively with non-empty interiors.
Let
$\mathcal{C}(\mathbb{R}, I)$ (resp. $\mathcal{C}(I,I)$) be the set of all continuous maps of $\mathbb{R}$ (resp. $I$) into $I$.
Similarly we define $\mathcal{C}(\mathbb{R}_+, J)$ and $\mathcal{C}(J,J)$.
For any $f$ in $\mathcal{C}(\mathbb{R}, I)$ or $\mathcal{C}(I,I)$,
let
$$\|f\|_I:=\sup \{|f(x)|: x\in I\},$$ and for any
$g$ in $\mathcal{C}(\mathbb{R}_+, J)$ or $\mathcal{C}(J,J)$,
let
$$\|g\|_J:=\sup \{|g(x)|: x\in J\}.$$
For any map $f$, let $\mathcal{R}(f)$
denote the range of $f$.
For $M, \delta\ge 0$, let
\begin{eqnarray*}
	\mathcal{F}_{I}(\delta,M)&:=&\{f\in \mathcal{C}_b(\mathbb{R}): \mathcal{R}(f)=I, f(a)=a, f(b)=b~\text{and}\\
	& &~~~~~\delta(x-y)\le f(x)-f(y)\le M(x-y), \forall x, y \in I~\mbox{with}~x\ge y\},
\\
	\mathcal{G}_{J}(\delta,M)&:=&\left\{g\in \mathcal{C}_b(\mathbb{R}_+): \mathcal{R}(g)=J, g(c)=c, g(d)=d\right. \text{and}\\
	& &~~~~~\left.\left(\frac{x}{y}\right)^\delta \le \frac{g(x)}{g(y)}\le \left(\frac{x}{y}\right)^M, \forall x, y \in J~\mbox{with}~x\ge y\right\}.
\end{eqnarray*}
Then it can be observed that  $\mathcal{F}_{I}(\delta,M) \subseteq \mathcal{F}_{I}(\delta_1,M_1)$ and $\mathcal{G}_{J}(\delta,M) \subseteq \mathcal{G}_{J}(\delta_1,M_1)$ whenever
$\delta \ge \delta_1\ge 0$ and $M_1\ge M\ge 0$.

\begin{pro}\label{P5}
	Let $M,\delta\ge 0$. Then $g\in 	\mathcal{G}_{J}(\delta,M)$ if and only if $\psi^{-1}\circ g\circ\psi  \in 	\mathcal{F}_{I}(\delta,M)$, where $\psi (x)=e^x$ and $I=\log(J):=\{\log x:x\in J\}$.
\end{pro}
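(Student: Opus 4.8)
The plan is to transport each of the three defining conditions of $\mathcal{G}_{J}(\delta,M)$ to the matching condition of $\mathcal{F}_{I}(\delta,M)$, and conversely, through the single substitution $f:=\psi^{-1}\circ g\circ \psi$, i.e.\ $f(x)=\log g(e^{x})$. The only structural facts I would use are that $\psi$ and $\psi^{-1}=\log$ are mutually inverse, strictly increasing homeomorphisms between $\mathbb{R}$ and $\mathbb{R}_{+}$ carrying $I$ onto $J$. Writing $J=[c,d]$, monotonicity of $\log$ gives $I=\log(J)=[\log c,\log d]$, so that $a=\log c$ and $b=\log d$; I would record this identification first, since it is what makes the endpoint conditions correspond.

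Next I would dispose of the ``soft'' conditions. Because $\psi$ is a homeomorphism of $\mathbb{R}$ onto $\mathbb{R}_{+}$, the map $f$ is a continuous self-map of $\mathbb{R}$ and $\mathcal{R}(f)=\psi^{-1}(\mathcal{R}(g))$; hence $\mathcal{R}(g)=J$ is equivalent to $\mathcal{R}(f)=\psi^{-1}(J)=I$, and when this holds $f$ has compact range and so lies in $\mathcal{C}_{b}(\mathbb{R})$, while in the reverse direction $g=\psi\circ f\circ\psi^{-1}$ is a bounded continuous self-map of $\mathbb{R}_{+}$. For the endpoints, $f(a)=\log g(e^{a})=\log g(c)$, so $f(a)=a$ iff $\log g(c)=\log c$ iff $g(c)=c$, and likewise $f(b)=b$ iff $g(d)=d$.

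The substantive step is the two-sided Lipschitz-type inequality. For $x,y\in I$ I would set $u:=e^{x}$, $v:=e^{y}\in J$; since $\psi|_{I}\colon I\to J$ is an order-preserving bijection, the quantifier ``for all $x,y\in I$ with $x\ge y$'' matches ``for all $u,v\in J$ with $u\ge v$''. Using that $g>0$ on $\mathbb{R}_{+}$ (forced by $\mathcal{R}(g)=J\subset\mathbb{R}_{+}$, so all the logarithms below are legitimate), one has $f(x)-f(y)=\log\big(g(u)/g(v)\big)$ and $x-y=\log(u/v)\ge 0$. Thus $\delta(x-y)\le f(x)-f(y)\le M(x-y)$ is exactly $\delta\log(u/v)\le\log\big(g(u)/g(v)\big)\le M\log(u/v)$, and applying the strictly increasing bijection $\exp$---a reversible operation---converts this into $(u/v)^{\delta}\le g(u)/g(v)\le (u/v)^{M}$. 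Every step here is an equivalence, so both implications of the proposition follow simultaneously.

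There is no genuine obstacle in this argument; the points that need care are exactly the bookkeeping ones noted above: the identification $a=\log c$, $b=\log d$ coming from monotonicity of $\log$ (so that the endpoint conditions line up), the positivity of $g$ (so that $\log g$ makes sense), and the sign $\log(u/v)\ge 0$ when $u\ge v$ (so that exponentiating preserves the direction of the two-sided inequality).
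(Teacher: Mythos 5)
Your proposal is correct and follows essentially the same route as the paper's proof: identify $a=\log c$, $b=\log d$, transport the endpoint and range conditions directly, and convert the ratio inequality into the difference inequality by taking logarithms under the substitution $u=e^x$, $v=e^y$. The only cosmetic difference is that you make the two-way equivalence of each step explicit, whereas the paper proves the forward direction and notes the converse is similar.
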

\begin{proof}
Let	$g\in 	\mathcal{G}_{J}(\delta,M)$ and $I:=\log (J)=[a,b]$. Then $a=\log c$ and $b=\log d$.  Clearly, $f:=\psi^{-1}\circ g\circ\psi  \in \mathcal{C}_b(\mathbb{R})$. Also, we have $f(a)=\log g(e^a)=\log g(c)=\log c=a$, and similarly $f(b)=b$. So, $I\subseteq \mathcal{R}(f)$. The reverse inclusion follows by definitions of $f$ and $I$, because $\mathcal{R}(g)=J$. Therefore $\mathcal{R}(f)=I$.

 Next, let $x, y\in I$ with $x\ge y$. Then there exist $u,v \in J$ with $u\ge v$ such that $x=\log u$ and $y=\log v$. So, from the assumption on $g$, we have
\begin{eqnarray*}
\left(\frac{u}{v}\right)^\delta \le \frac{g(u)}{g(v)}\le \left(\frac{u}{v}\right)^M,
\end{eqnarray*}
implying that
\begin{eqnarray*}
\delta	\log \left(\frac{e^x}{e^y}\right) \le \log \left(\frac{g(e^x)}{g(e^y)}\right)\le M\log \left(\frac{e^x}{e^y}\right).
\end{eqnarray*}
i.e., $\delta(x-y)\le f(x)-f(y)\le M(x-y)$. Therefore $f \in 	\mathcal{F}_{I}(\delta,M)$. The converse follows similarly.
\end{proof}

\begin{pro}\label{P1}
	If $M<1$ or $\delta >1$, then $\mathcal{G}_{J}(\delta,M)=\emptyset$.
If $M=1$ or $\delta=1$, then $\mathcal{G}_{J}(\delta,M):=\{g\in \mathcal{C}_b(\mathbb{R}_+): g|_{J}={\rm id}\}$.
\end{pro}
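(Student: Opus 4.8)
The plan is to transport everything to the additive normal form via Proposition~\ref{P5} and then read the two claims off the defining difference inequality of $\mathcal{F}_I(\delta,M)$. Since $\psi(x)=e^x$ is a homeomorphism of $\mathbb{R}$ onto $\mathbb{R}_+$, the conjugation $g\mapsto\psi^{-1}\circ g\circ\psi$ is a bijection between $\mathcal{G}_J(\delta,M)$ and $\mathcal{F}_I(\delta,M)$ with $I=\log J=[a,b]$, by Proposition~\ref{P5}; moreover (as in the proof of that proposition) it matches the condition ``$\mathcal{R}(g)=J$'' with ``$\mathcal{R}(f)=I$'' and the condition ``$g|_J={\rm id}$'' with ``$f|_I={\rm id}$''. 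Hence it suffices to show: $\mathcal{F}_I(\delta,M)=\emptyset$ when $M<1$ or $\delta>1$, and $\mathcal{F}_I(\delta,M)=\{f\in\mathcal{C}_b(\mathbb{R}):\mathcal{R}(f)=I,\ f|_I={\rm id}\}$ when $M=1$ or $\delta=1$.

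For the emptiness claim I would just feed the endpoints into the inequality $\delta(x-y)\le f(x)-f(y)\le M(x-y)$: taking $x=b$, $y=a$ and using $f(a)=a$, $f(b)=b$ gives $\delta(b-a)\le b-a\le M(b-a)$, and since $b-a>0$ this forces $\delta\le 1\le M$. Thus $\mathcal{F}_I(\delta,M)=\emptyset$ whenever $M<1$ or $\delta>1$; in particular the set is empty unless $0\le\delta\le 1\le M$, so in proving the second claim I may assume $\delta\le 1\le M$, the remaining parameter values being already covered by the first claim.

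Next, assume $M=1$ and let $f\in\mathcal{F}_I(\delta,1)$. For any $x\in[a,b]$, applying the upper bound to the pair $(x,a)$ gives $f(x)-a\le x-a$, and applying it to the pair $(b,x)$ gives $b-f(x)\le b-x$; hence $f(x)\le x$ and $f(x)\ge x$, i.e.\ $f(x)=x$. So $f|_I={\rm id}$, and since $f$ already satisfies $\mathcal{R}(f)=I$ this gives the inclusion ``$\subseteq$''; the case $\delta=1$ is identical, with the lower bound replacing the upper bound. Conversely, if $f\in\mathcal{C}_b(\mathbb{R})$ has $\mathcal{R}(f)=I$ and $f|_I={\rm id}$, then $f(a)=a$, $f(b)=b$, and for $x\ge y$ in $I$ one has $f(x)-f(y)=x-y$, so $\delta(x-y)\le x-y\le M(x-y)$ because $\delta\le 1\le M$; thus $f\in\mathcal{F}_I(\delta,M)$, proving ``$\supseteq$''. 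Pulling everything back through $\psi$ by Proposition~\ref{P5} then yields the assertions for $\mathcal{G}_J(\delta,M)$.

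No step here is a genuine obstacle; the two points that need a little care are (i) keeping the range condition $\mathcal{R}(f)=I$ (equivalently $\mathcal{R}(g)=J$) attached to the right-hand set in the $M=1$ / $\delta=1$ case, which is what makes the inclusion ``$\supseteq$'' literally correct, and (ii) the short two-point argument that promotes the one-sided slope bound on $I$ into the pointwise identity $f={\rm id}$ on $I$ — this is the only place where the boundary value $M=1$ (or $\delta=1$) is used in an essential rather than purely formal way.
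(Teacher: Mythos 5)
Your proof is correct and follows essentially the same route as the paper: conjugate to $\mathcal{F}_I(\delta,M)$ via Proposition~\ref{P5} and then run elementary endpoint arguments on the inequality $\delta(x-y)\le f(x)-f(y)\le M(x-y)$. Your only deviations are minor --- you obtain emptiness by plugging both endpoints in simultaneously rather than deriving $f(x)<x$ and contradicting $f(b)=b$ as the paper does, and you rightly observe that the reverse inclusion in the $M=1$/$\delta=1$ case is literally correct only if the range condition $\mathcal{R}(g)=J$ is kept attached to the right-hand set, a point the paper dismisses as ``trivial.''
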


\begin{proof}
Let $g\in \mathcal{G}_{J}(\delta,M)$.	Then by Proposition \ref{P5},  $f:=\psi^{-1}\circ g\circ\psi  \in 	\mathcal{F}_{I}(\delta,M)$, where $\psi (x)=e^x$ and $I=\log(J)$. So, for any $x, y\in I$ such that $x\ge y$, we have
	\begin{eqnarray}\label{5}
	\delta(x-y)\le f(x)-f(y)\le M(x-y).
	\end{eqnarray}
	If $M<1$, then by setting $y=a$ in \eqref{5}, we get that $f(x)<x, \forall x\in I$ with $x>a$. This is a contradiction to the fact that $f(b)=b$, because $b>a$. So, $\mathcal{F}_{I}(\delta,M)=\emptyset$ and hence $\mathcal{G}_{J}(\delta,M)=\emptyset$, whenever $M<1$. A similar argument holds when $\delta>1$.
	
	If $M=1$, then from \eqref{5}, we have
		\begin{eqnarray}\label{6}
 f(x)-f(y)\le (x-y), \quad \forall x,y \in I~\text{with}~x\ge y.
	\end{eqnarray}
	Now for $x=b$, \eqref{6} implies that $f(y)\ge y, \forall y\in I$ with $y<b$. Moreover, setting $y=a$ in \eqref{6}, we have $f(x)\le x, \forall x\in I$ with $x> a$. Thus $f(x)=x, \forall x\in I$, and therefore $f|_I={\rm id}$. This implies that $g|_J={\rm id}$.  The reverse inclusion is trivial. So, $\mathcal{G}_{J}(\delta,M):=\{g\in \mathcal{C}_b(\mathbb{R}_+): g|_{J}={\rm id}\}$. A similar argument holds when $\delta=1$.
\end{proof}

\begin{pro}\label{Fdm complete}
The set $\mathcal{F}_{I}(\delta,M)$
is a complete metric space  under the metric induced by $\|\cdot \|$.
\end{pro}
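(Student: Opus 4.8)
The plan is to show that $\mathcal{F}_{I}(\delta,M)$ is a closed subset of the Banach space $\mathcal{C}_b(\mathbb{R})$; since a closed subset of a complete metric space is itself complete, this will suffice. So I would begin with an arbitrary Cauchy sequence $(f_k)_{k\ge 1}$ in $\mathcal{F}_{I}(\delta,M)$. Because $\mathcal{C}_b(\mathbb{R})$ is complete under $\|\cdot\|$, there is some $f\in \mathcal{C}_b(\mathbb{R})$ with $\|f_k-f\|\to 0$, and in particular $f_k(x)\to f(x)$ for every $x\in\mathbb{R}$. It then remains to check that $f$ satisfies the three defining requirements of $\mathcal{F}_{I}(\delta,M)$.

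The boundary conditions and the two-sided estimate are immediate from pointwise convergence: $f(a)=\lim_k f_k(a)=a$ and $f(b)=\lim_k f_k(b)=b$, while for fixed $x,y\in I$ with $x\ge y$ the inequalities $\delta(x-y)\le f_k(x)-f_k(y)\le M(x-y)$ pass to the limit to yield $\delta(x-y)\le f(x)-f(y)\le M(x-y)$. Thus the last thing to verify is the range condition $\mathcal{R}(f)=I$, and this is the only step needing a little care.

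For the inclusion $\mathcal{R}(f)\subseteq I$, note that $f_k(x)\in I$ for all $x$ and all $k$, and since $I=[a,b]$ is closed the pointwise limit $f(x)$ again lies in $I$. For the reverse inclusion, observe that $f$ is continuous on the connected set $\mathbb{R}$, so $\mathcal{R}(f)$ is an interval; as it contains $f(a)=a$ and $f(b)=b$ with $a<b$, the intermediate value theorem forces $[a,b]=I\subseteq \mathcal{R}(f)$. Hence $\mathcal{R}(f)=I$ and $f\in\mathcal{F}_{I}(\delta,M)$, which proves that $\mathcal{F}_{I}(\delta,M)$ is closed in $\mathcal{C}_b(\mathbb{R})$ and therefore complete. (In the degenerate case $\mathcal{F}_{I}(\delta,M)=\emptyset$, which by Propositions~\ref{P1} and \ref{P5} happens precisely when $M<1$ or $\delta>1$, the assertion holds vacuously.)

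There is no genuine obstacle here: the argument is the standard "closed subspace of a Banach space" routine. The only subtlety worth flagging is that recovering the exact equality $\mathcal{R}(f)=I$, rather than merely an inclusion, requires combining closedness of $I$ with the intermediate value theorem applied at the fixed endpoints $a$ and $b$.
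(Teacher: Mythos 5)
Your proposal is correct and follows exactly the paper's route: exhibit $\mathcal{F}_{I}(\delta,M)$ as a closed subset of the Banach space $\mathcal{C}_b(\mathbb{R})$ and invoke the fact that a closed subset of a complete space is complete. The paper simply asserts closedness as "easily seen," whereas you supply the verification (limits of the endpoint conditions, the two-sided Lipschitz estimate, and the range condition via closedness of $I$ plus the intermediate value theorem), all of which is accurate.
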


\begin{proof}
	It can be easily seen that $\mathcal{F}_{I}(\delta,M)$ is a closed subset of $\mathcal{C}_b(\mathbb{R})$. So, since  $\mathcal{C}_b(\mathbb{R})$ is complete with respect to the metric induced by $\|\cdot \|$, it follows that  $\mathcal{F}_{I}(\delta,M)$ is also complete.
\end{proof}


In view of Proposition \ref{P1}, we cannot seek solutions of
\eqref{1} without imposing conditions on $M$ and $\delta$. So, henceforth we assume that $0<\delta\le 1\le M$.
We need the following
six technical lemmas,
last three of which
look similar to some lemmas given in \cite{Mu-Su-2009} but
we have to rewrite their proofs carefully because of the following difference:
It is assumed in \cite{Mu-Su-2009} that $f\in \mathcal{C}(I,I)$ and $f$ is a homeomorphism of $I$ onto itself,
implying that $f^{-1}$ is well defined on the whole domain $I$ of $f$,
but our this paper deals with $f\in \mathcal{C}(\mathbb{R}, I)$ satisfying that $f|_I$ is a homeomorphism of $I$ onto itself.
So, $f$ is not a homeomorphism on $\mathbb{R}$,
that is, $f^{-1}$ is not defined on the whole $\mathbb{R}$.
In this case,
we can consider only the inverse of $f|_{I}$. For a specific instance,
the conclusion $L_f\in \mathcal{F}_{I}(K_0,K_1)$, made in Lemma 3.2 of \cite{Mu-Su-2009}, is not true here, simply because  we have defined $L_f$ only on $I$. However, even if we define it on the whole of $\mathbb{R}$, it does not belong to  $\mathcal{F}_{I}(K_0,K_1)$, because in that case $\mathcal{R}(L_f)\ne I$. So, in view of this, we include their proofs here in order to avoid confusions.

\begin{lm}{\rm (\cite{zhang1990})} \label{L2}
	Let $f, g\in \mathcal{C}(I,I)$ satisfy $|f(x)-f(y)|\le M|x-y|$ and $|g(x)-g(y)|\le M|x-y|$ for all $x, y \in I$, where $M\ge 1$. Then
	\begin{eqnarray}
	\|f^k-g^k\|_I\le \left(\sum_{j=0}^{k-1}M^j\right)\|f-g\|_I~~\text{for}~k=1,2,\ldots.
	\end{eqnarray}
\end{lm}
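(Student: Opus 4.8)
The plan is to prove the estimate by induction on $k$. The case $k=1$ is trivial since $\sum_{j=0}^{0}M^j=1$. For the inductive step, suppose the bound holds for $k$, and estimate $\|f^{k+1}-g^{k+1}\|_I$ by inserting an intermediate term: for each $x\in I$,
\begin{eqnarray*}
|f^{k+1}(x)-g^{k+1}(x)|
&=&|f(f^k(x))-g(g^k(x))|\\
&\le&|f(f^k(x))-f(g^k(x))|+|f(g^k(x))-g(g^k(x))|.
\end{eqnarray*}
The first term is bounded by $M|f^k(x)-g^k(x)|\le M\|f^k-g^k\|_I$ using the Lipschitz hypothesis on $f$, and the second term is bounded by $\|f-g\|_I$ since $g^k(x)\in I$. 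Taking the supremum over $x\in I$ gives $\|f^{k+1}-g^{k+1}\|_I\le M\|f^k-g^k\|_I+\|f-g\|_I$.

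Then I would substitute the induction hypothesis $\|f^k-g^k\|_I\le\big(\sum_{j=0}^{k-1}M^j\big)\|f-g\|_I$ into this recursion to obtain
$$
\|f^{k+1}-g^{k+1}\|_I\le\left(M\sum_{j=0}^{k-1}M^j+1\right)\|f-g\|_I=\left(\sum_{j=0}^{k}M^j\right)\|f-g\|_I,
$$
which closes the induction. Note that only the Lipschitz bound on $f$ (not on $g$) is actually used in this argument; the hypothesis on $g$ is harmless and is kept for symmetry.

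There is no real obstacle here: the only thing one must be careful about is that all the intermediate points $f^k(x)$ and $g^k(x)$ genuinely lie in $I$, which is guaranteed because $f,g\in\mathcal{C}(I,I)$ map $I$ into itself, so every iterate is again a self-map of $I$ and the Lipschitz inequalities may be applied to arguments in $I$. This is precisely the point where the statement is cleaner than in the ambient-space setting discussed before the lemma, and it is why the result is quoted directly from \cite{zhang1990} with $f,g\in\mathcal{C}(I,I)$.
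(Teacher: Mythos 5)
Your proof is correct: the paper itself gives no proof of this lemma (it is quoted directly from \cite{zhang1990}), and your induction via the decomposition $|f(f^k(x))-g(g^k(x))|\le |f(f^k(x))-f(g^k(x))|+|f(g^k(x))-g(g^k(x))|$ is exactly the standard argument behind the cited result. Your observation that only the Lipschitz bound on $f$ is needed for this particular splitting (the bound on $g$ would be used with the symmetric splitting) is also accurate.
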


\begin{lm}{\rm (\cite{Zhang-Baker})} \label{L1}
Let $f\in \mathcal{C}(I, I)$ satisfy
$f(a)=a$, $f(b)=b$ and $\delta(x-y)\le f(x)-f(y)\le M(x-y)$ for all $x, y \in I$ with $x\ge y$, where $0<\delta\le 1\le M$.
 Then $f$ is a homeomorphism of $I$ onto itself and
	\begin{eqnarray}\label{7}
		\frac{1}{M}(x-y)\le f^{-1}(x)-f^{-1}(y)\le \frac{1}{\delta}(x-y),
	\end{eqnarray}
for each $x,y \in I~\text{with}~ x\ge y$.
\end{lm}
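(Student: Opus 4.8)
The plan is to show that $f$ is a bijection of $I$ onto itself, hence a homeomorphism by compactness, and then to transfer the bi-Lipschitz bounds on $f$ to the inverse. First I would observe that the hypothesis $\delta(x-y)\le f(x)-f(y)$ for $x\ge y$ with $\delta>0$ forces $f$ to be strictly increasing on $I$, hence injective. Next, since $f$ is continuous, $f(I)$ is a connected subset of $I$ containing both endpoints $f(a)=a$ and $f(b)=b$; combined with injectivity and the intermediate value theorem this gives $f(I)=[a,b]=I$, so $f$ is a continuous bijection of the compact interval $I$ onto itself. A continuous bijection of a compact space is a homeomorphism, so $f^{-1}\colon I\to I$ exists and is continuous (and also strictly increasing).

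For the quantitative estimate \eqref{7}, I would fix $x,y\in I$ with $x\ge y$ and set $u=f^{-1}(x)$, $v=f^{-1}(y)$. Since $f^{-1}$ is increasing we have $u\ge v$, and $u,v\in I$, so the hypothesis applied to the pair $(u,v)$ gives
\begin{eqnarray*}
\delta(u-v)\le f(u)-f(v)=x-y\le M(u-v).
\end{eqnarray*}
Reading off the two inequalities separately and dividing (legitimate since $\delta,M>0$ and $u-v\ge 0$) yields
\begin{eqnarray*}
\frac{1}{M}(x-y)\le u-v=f^{-1}(x)-f^{-1}(y)\le \frac{1}{\delta}(x-y),
\end{eqnarray*}
which is exactly \eqref{7}.

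I do not anticipate a genuine obstacle here; the only points requiring a little care are (i) making sure the strict monotonicity is extracted correctly from the one-sided inequality $\delta(x-y)\le f(x)-f(y)$ (this is where $\delta>0$, rather than merely $\delta\ge 0$, is used), and (ii) confirming that $f^{-1}$ maps into $I$ so that the hypothesis on $f$ may legitimately be invoked at the preimage points $u$ and $v$ — this is guaranteed by $f(I)=I$ established in the first step. The condition $M\ge 1\ge\delta$ is not actually needed for the argument itself; it merely guarantees that the hypothesis is non-vacuous (cf.\ Proposition \ref{P1}) and that the constants $1/M$ and $1/\delta$ bracket $1$ in the expected way.
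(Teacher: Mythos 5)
Your proof is correct. Note that the paper itself gives no proof of this lemma --- it is quoted from \cite{Zhang-Baker} --- so there is nothing internal to compare against; your argument (strict monotonicity from the lower bound with $\delta>0$, surjectivity from $f(a)=a$, $f(b)=b$ and the intermediate value theorem, then substituting $u=f^{-1}(x)$, $v=f^{-1}(y)$ into the two-sided inequality to obtain \eqref{7}) is the standard one and is exactly what is needed, including your correct side remarks on where $\delta>0$ is used and why $f^{-1}$ lands back in $I$.
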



\begin{lm}{\rm (\cite{Zhang-Baker})}\label{L3}
	Let $f,g:I\to I$ be homeomorphisms such that
	$\delta(x-y)\le f(x)-f(y)\le M(x-y)$  and $\delta(x-y)\le g(x)-g(y)\le M(x-y)$ for all $x, y \in I$ with $x\ge y$,
 where $0<\delta\le 1\le M$. Then $$\delta\|f^{-1}-g^{-1}\|_I\le \|f-g\|_I\le M\|f^{-1}-g^{-1}\|_I.$$
\end{lm}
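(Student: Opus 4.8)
The plan is to derive both inequalities from pointwise estimates of $|f^{-1}(y)-g^{-1}(y)|$ (respectively of $|f(x)-g(x)|$) followed by taking suprema over $I$, using only the one-sided Lipschitz bounds on $f,g$ and the corresponding bounds on $f^{-1},g^{-1}$ supplied by Lemma \ref{L1}. First I would observe that the hypothesis $\delta(x-y)\le f(x)-f(y)$ with $\delta>0$ forces $f$ (and likewise $g$) to be strictly increasing, so each is an increasing homeomorphism of $I=[a,b]$ onto itself and hence fixes the endpoints, $f(a)=a$, $f(b)=b$. Therefore Lemma \ref{L1} applies and gives $M^{-1}(x-y)\le f^{-1}(x)-f^{-1}(y)\le\delta^{-1}(x-y)$ for $x\ge y$ in $I$, and similarly for $g^{-1}$.

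For $\delta\|f^{-1}-g^{-1}\|_I\le\|f-g\|_I$, fix $y\in I$ and put $u:=f^{-1}(y)$, $v:=g^{-1}(y)$; after possibly interchanging the names of $f$ and $g$ (legitimate, since both the hypothesis and the conclusion are symmetric under this swap) we may assume $u\ge v$. The lower bound for $f$ applied to the pair $u\ge v$ gives $\delta(u-v)\le f(u)-f(v)$, and since $f(u)=y=g(v)$ this reads $\delta(u-v)\le g(v)-f(v)\le\|f-g\|_I$. Hence $|f^{-1}(y)-g^{-1}(y)|=u-v\le\delta^{-1}\|f-g\|_I$, and taking the supremum over $y\in I$ finishes this half.

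For $\|f-g\|_I\le M\|f^{-1}-g^{-1}\|_I$ I would argue symmetrically: fix $x\in I$, set $p:=f(x)$, $q:=g(x)$, assume $p\ge q$ after a possible swap, and apply the lower inverse bound from Lemma \ref{L1} for $g$, namely $M^{-1}(p-q)\le g^{-1}(p)-g^{-1}(q)$. Since $g^{-1}(q)=x=f^{-1}(p)$, the right-hand side equals $g^{-1}(p)-f^{-1}(p)$, so $M^{-1}(p-q)\le|f^{-1}(p)-g^{-1}(p)|\le\|f^{-1}-g^{-1}\|_I$, giving $|f(x)-g(x)|=p-q\le M\|f^{-1}-g^{-1}\|_I$; take the supremum over $x$. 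Equivalently, one may simply apply the first inequality to the pair $f^{-1},g^{-1}$, which by Lemma \ref{L1} again lies in a class of the same type, now with parameters $M^{-1}\le 1\le\delta^{-1}$, and obtain $M^{-1}\|f-g\|_I\le\|f^{-1}-g^{-1}\|_I$.

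There is no genuine difficulty here; the only points deserving attention are the two ``without loss of generality'' reductions --- valid because at a fixed point we compare just the two relevant values and because all quantities involved are symmetric in $f\leftrightarrow g$ --- and the appeal to Lemma \ref{L1} to ensure $f^{-1},g^{-1}$ are defined on all of $I$ with the stated one-sided bounds. The complication noted just before Lemma \ref{L2} (that a map in $\mathcal{C}(\mathbb{R},I)$ need not be invertible on $\mathbb{R}$) does not arise, since this statement already restricts to $f,g\in\mathcal{C}(I,I)$ that are homeomorphisms of $I$.
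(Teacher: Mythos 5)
The paper does not prove Lemma \ref{L3}; it is quoted verbatim from \cite{Zhang-Baker} and, unlike the last three lemmas of Section 2, is not among those whose proofs the authors rewrite. Your argument is correct and is the standard one for this fact: the pointwise estimates $\delta\,|f^{-1}(y)-g^{-1}(y)|\le\|f-g\|_I$ and $M^{-1}|f(x)-g(x)|\le\|f^{-1}-g^{-1}\|_I$ obtained by evaluating the one-sided Lipschitz bounds at the points $u=f^{-1}(y)$, $v=g^{-1}(y)$ (resp.\ $p=f(x)$, $q=g(x)$), with the symmetry in $f\leftrightarrow g$ justifying the harmless ``without loss of generality'' reductions; your observation that strict monotonicity plus surjectivity onto $I$ forces $f(a)=a$, $f(b)=b$ so that Lemma \ref{L1} applies is also the right way to make the inverses available with the stated bounds.
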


For $\alpha_k \ge 0$ ($1\le k\le n$) with $\sum_{k=1}^{n}\alpha_k=1$ and $f\in  \mathcal{F}_{I}(\delta,M)$, define $L_f: I\to I$ by
$$L_f(x)=\alpha_1 x+\alpha_2 f(x)+\cdots+ \alpha_nf^{n-1}(x),\quad x\in I.$$

\begin{lm}\label{Lf estimate}
	Let	$f\in  \mathcal{F}_{I}(\delta,M)$, where $0<\delta\le 1\le M$. Then $L_f(a)=a$, $L_f(b)=b$, $\mathcal{R}(L_f)=I$ and
	\begin{eqnarray}\label{8}
	K_0(x-y)\le L_f(x)-L_f(y)\le K_1(x-y),
	\end{eqnarray}
for each $x,y \in I~\text{with}~ x\ge y$, where
 $K_0:=\sum_{k=1}^{n}\alpha_k\delta^{k-1}$ and $K_1:=\sum_{k=1}^{n}\alpha_kM^{k-1}$.
\end{lm}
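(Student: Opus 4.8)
The plan is to derive all four conclusions from the defining properties of $\mathcal{F}_{I}(\delta,M)$ together with $\alpha_k\ge 0$ and $\sum_{k=1}^{n}\alpha_k=1$. The first step is to control the iterates $f^{k-1}$ on $I$. Since $f\in\mathcal{F}_{I}(\delta,M)$ with $\delta>0$, the restriction $f|_{I}$ is strictly increasing, and because $\mathcal{R}(f)=I$ it maps $I$ into $I$; hence every iterate $f^{k-1}$ maps $I$ into $I$ and is strictly increasing there. I would then prove by induction on $k$ that, for all $x,y\in I$ with $x\ge y$,
$$\delta^{k-1}(x-y)\le f^{k-1}(x)-f^{k-1}(y)\le M^{k-1}(x-y),\qquad k=1,2,\dots,n.$$
The base case $k=1$ is trivial since $f^{0}={\rm id}$, and in the inductive step one applies the two-sided estimate defining $\mathcal{F}_{I}(\delta,M)$ to the pair $f^{k-1}(x)\ge f^{k-1}(y)$, which lies in $I$ by the previous remark, and then combines with the inductive hypothesis.

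Next I would treat the endpoint conditions. From $f(a)=a$ and $f(b)=b$ one gets $f^{j}(a)=a$ and $f^{j}(b)=b$ for every $j\ge 0$ by induction, so $L_f(a)=\sum_{k=1}^{n}\alpha_k a=a$ and $L_f(b)=\sum_{k=1}^{n}\alpha_k b=b$, using $\sum_{k=1}^{n}\alpha_k=1$. The estimate \eqref{8} then follows immediately: since $L_f(x)-L_f(y)=\sum_{k=1}^{n}\alpha_k\bigl(f^{k-1}(x)-f^{k-1}(y)\bigr)$, multiplying the $k$-th inequality above by $\alpha_k\ge 0$ and summing over $k$ yields exactly $K_0(x-y)\le L_f(x)-L_f(y)\le K_1(x-y)$ with $K_0=\sum_{k=1}^{n}\alpha_k\delta^{k-1}$ and $K_1=\sum_{k=1}^{n}\alpha_kM^{k-1}$.

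Finally, for $\mathcal{R}(L_f)=I$: for each $x\in I$ the value $L_f(x)$ is a convex combination of the points $f^{k-1}(x)\in I=[a,b]$, hence lies in $I$, so $L_f$ is a well-defined continuous self-map of $I$. Moreover $K_0>0$ because $\delta>0$ and at least one $\alpha_k$ is strictly positive, so \eqref{8} shows $L_f$ is strictly increasing on $I$; together with $L_f(a)=a$, $L_f(b)=b$ and the intermediate value theorem this gives $\mathcal{R}(L_f)=[a,b]=I$. No step is genuinely difficult, but the one requiring care is the iterated estimate, where one must repeatedly use $\mathcal{R}(f)=I$ to keep all arguments of $f$ inside $I$ so that the hypotheses of $\mathcal{F}_{I}(\delta,M)$ stay applicable — this is exactly the subtlety the authors flagged about $f$ being defined on $\mathbb{R}$ rather than only on $I$.
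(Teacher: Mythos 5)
Your proof is correct and follows essentially the same route as the paper's: sum the termwise estimates $\delta^{k-1}(x-y)\le f^{k-1}(x)-f^{k-1}(y)\le M^{k-1}(x-y)$ against the nonnegative weights $\alpha_k$. The paper simply asserts the endpoint, range, and iterated-Lipschitz facts as "easily seen," whereas you supply the inductions and the convex-combination/IVT argument explicitly — a welcome but not substantively different elaboration.
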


\begin{proof}
It can be easily seen that $L_f(a)=a$, $L_f(b)=b$ and $\mathcal{R}(L_f)=I$. Also, for any $x,y\in I$ with $x\ge y$, we have
\begin{eqnarray*}
L_f(x)-L_f(y)&=&\sum_{k=1}^{n}\alpha_kf^{k-1}(x)-\sum_{k=1}^{n}\alpha_kf^{k-1}(y)\\
&=&\sum_{k=1}^{n}\alpha_k(f^{k-1}(x)-f^{k-1}(y))\\
&\le &\left(\sum_{k=1}^{n}\alpha_kM^{k-1}\right)(x-y)=K_1(x-y)
\end{eqnarray*}
and
\begin{eqnarray*}
	L_f(x)-L_f(y)&=&\sum_{k=1}^{n}\alpha_kf^{k-1}(x)-\sum_{k=1}^{n}\alpha_kf^{k-1}(y)\\
	&=&\sum_{k=1}^{n}\alpha_k(f^{k-1}(x)-f^{k-1}(y))\\
	&\ge &\left(\sum_{k=1}^{n}\alpha_k\delta^{k-1}\right)(x-y)=K_0(x-y).
\end{eqnarray*}
This completes the proof.
\end{proof}

\begin{lm}\label{Lf inverse lipschits}
	Let $0<\delta \le 1\le  M$ and	$f\in  \mathcal{F}_{I}(\delta,M)$. Then
	\begin{eqnarray}\label{9}
	\frac{1}{K_1}(x-y)\le L_f^{-1}(x)-L_f^{-1}(y)\le \frac{1}{K_0}(x-y),
	\end{eqnarray}
for each $x,y \in I~\text{with}~ x\ge y$, where $K_0, K_1$ are as in Lemma \ref{Lf estimate}.
\end{lm}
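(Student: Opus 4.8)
The plan is to deduce \eqref{9} by applying Lemma \ref{L1} directly to the map $L_f\colon I\to I$, with the constants $K_0$ and $K_1$ playing the roles of $\delta$ and $M$. The first step is to check that these constants satisfy the hypothesis $0<K_0\le 1\le K_1$ required by Lemma \ref{L1}. Recall that $\alpha_k\ge 0$ for all $k$ and $\sum_{k=1}^{n}\alpha_k=1$, so $\alpha_j>0$ for some $j$. Since $0<\delta\le 1$, we have $0<\delta^{k-1}\le 1$ for $1\le k\le n$, whence $0<\alpha_j\delta^{j-1}\le K_0=\sum_{k=1}^{n}\alpha_k\delta^{k-1}\le\sum_{k=1}^{n}\alpha_k=1$. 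Likewise $M\ge 1$ gives $M^{k-1}\ge 1$, so $K_1=\sum_{k=1}^{n}\alpha_kM^{k-1}\ge\sum_{k=1}^{n}\alpha_k=1$. Hence $0<K_0\le 1\le K_1$.

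Next I would record that $L_f$ satisfies the remaining hypotheses of Lemma \ref{L1}. By Lemma \ref{Lf estimate} we have $\mathcal{R}(L_f)=I$, so $L_f\in\mathcal{C}(I,I)$ (it is a finite linear combination of iterates of the continuous map $f$), together with $L_f(a)=a$, $L_f(b)=b$, and $K_0(x-y)\le L_f(x)-L_f(y)\le K_1(x-y)$ for all $x,y\in I$ with $x\ge y$. Applying Lemma \ref{L1} to $L_f$ with the pair $(K_0,K_1)$ then gives that $L_f$ is a homeomorphism of $I$ onto itself and
$$\frac{1}{K_1}(x-y)\le L_f^{-1}(x)-L_f^{-1}(y)\le\frac{1}{K_0}(x-y)$$
for each $x,y\in I$ with $x\ge y$, which is exactly \eqref{9}.

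I do not expect a genuine obstacle: once the elementary bounds on $K_0$ and $K_1$ are in place, the conclusion is a one-step application of Lemma \ref{L1}. The only point requiring care — and the reason this is isolated as a separate lemma rather than read off from membership in some $\mathcal{F}_I(K_0,K_1)$ — is that $L_f$ is defined only on $I$ and does not extend to an element of $\mathcal{F}_I(K_0,K_1)$ on all of $\mathbb{R}$ (its range over $\mathbb{R}$ would not equal $I$); this is precisely the discrepancy with \cite{Mu-Su-2009} noted before Lemma \ref{L2}. Accordingly I would invoke Lemma \ref{L1} in its ``$\mathcal{C}(I,I)$'' formulation, which is all that is needed here, and not appeal to any statement about $L_f^{-1}$ being defined on more than $I$.
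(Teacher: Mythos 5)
Your proposal is correct and follows essentially the same route as the paper: the paper's proof likewise reduces the claim to Lemma \ref{L1} (via its proof) applied to $L_f$, using Lemma \ref{Lf estimate} to supply the hypotheses $L_f(a)=a$, $L_f(b)=b$, $\mathcal{R}(L_f)=I$ and the two-sided bound \eqref{8} with $0<K_0\le 1\le K_1$. Your explicit verification that $0<K_0\le 1\le K_1$ from $\alpha_k\ge 0$, $\sum_k\alpha_k=1$ and $0<\delta\le 1\le M$ is a small but welcome addition that the paper leaves implicit.
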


\begin{proof}
Follows from the proof of Lemma \ref{L1}, by noting
from Lemma \ref{Lf estimate} that  $L_f(a)=a$, $L_f(b)=b$, $\mathcal{R}(L_f)=I$ and $L_f$ satisfies \eqref{8} with $0<K_0\le 1\le K_1$.
\end{proof}

\begin{lm}\label{Lf inverse estimate}
Let $0<\delta \le 1\le  M$ and	$f_1, f_2\in  \mathcal{F}_{I}(\delta,M)$. Then $$\|L_{f_1}-L_{f_2}\|_I\le K_2\|f_1-f_2\|_I~\text{and}~\|L_{f_1}^{-1}-L_{f_2}^{-1}\|_I\le \frac{K_2}{K_0}\|f_1-f_2\|_I,$$
	where $K_0, K_1$ are as in Lemma \ref{Lf estimate} and
	$K_2:= \sum_{k=2}^{n}\alpha_k(\sum_{j=0}^{k-2}M^j)$.
	\end{lm}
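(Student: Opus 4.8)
The plan is to prove the two estimates in turn, the first by a direct telescoping/Lipschitz argument and the second by reducing to the first via Lemma~\ref{L3}. For the first inequality, fix $x\in I$ and write
$L_{f_1}(x)-L_{f_2}(x)=\sum_{k=1}^{n}\alpha_k\bigl(f_1^{k-1}(x)-f_2^{k-1}(x)\bigr)$;
the $k=1$ term vanishes, so I only need to control $\|f_1^{k-1}-f_2^{k-1}\|_I$ for $k\ge 2$. Since $f_1,f_2\in\mathcal{F}_I(\delta,M)$, their restrictions to $I$ are $M$-Lipschitz (the right-hand inequality in the definition of $\mathcal{F}_I$ gives $|f_i(x)-f_i(y)|\le M|x-y|$ on $I$), so Lemma~\ref{L2} applies with these restrictions and yields $\|f_1^{k-1}-f_2^{k-1}\|_I\le\bigl(\sum_{j=0}^{k-2}M^j\bigr)\|f_1-f_2\|_I$. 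Summing against $\alpha_k$ and taking the supremum over $x\in I$ gives
$$\|L_{f_1}-L_{f_2}\|_I\le\Bigl(\sum_{k=2}^{n}\alpha_k\sum_{j=0}^{k-2}M^j\Bigr)\|f_1-f_2\|_I=K_2\|f_1-f_2\|_I.$$

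For the second inequality, I would invoke Lemma~\ref{Lf estimate}, which tells us that each $L_{f_i}$ maps $I$ to $I$, fixes the endpoints, and satisfies $K_0(x-y)\le L_{f_i}(x)-L_{f_i}(y)\le K_1(x-y)$ for $x\ge y$ in $I$, with $0<K_0\le1\le K_1$ (the bounds $K_0\le 1\le K_1$ follow from $\delta\le1\le M$ and $\sum\alpha_k=1$). By Lemma~\ref{L1} each $L_{f_i}$ is therefore a homeomorphism of $I$ onto itself. Hence Lemma~\ref{L3}, applied with $(f,g,\delta,M)$ replaced by $(L_{f_1},L_{f_2},K_0,K_1)$, gives $K_0\|L_{f_1}^{-1}-L_{f_2}^{-1}\|_I\le\|L_{f_1}-L_{f_2}\|_I$, i.e.
$$\|L_{f_1}^{-1}-L_{f_2}^{-1}\|_I\le\frac{1}{K_0}\|L_{f_1}-L_{f_2}\|_I\le\frac{K_2}{K_0}\|f_1-f_2\|_I,$$
where the last step uses the first inequality just proved.

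I do not anticipate a genuine obstacle here — this lemma is essentially bookkeeping on top of Lemmas~\ref{L2}, \ref{L1}, \ref{L3}, and \ref{Lf estimate}. The one point that needs a word of care, in the spirit of the remark preceding Lemma~\ref{L2}, is that Lemmas~\ref{L2} and \ref{L3} are stated for self-maps of $I$, whereas $f_1,f_2$ here lie in $\mathcal{C}_b(\mathbb{R})$; one must apply those lemmas to the restrictions $f_i|_I$ (which are indeed self-homeomorphisms of $I$ since $\mathcal{R}(f_i)=I$ and the endpoints are fixed) and observe that all the iterates and norms involved only ever see $I$, so the distinction is harmless. Similarly $L_{f_i}$ is defined only on $I$, which is exactly the setting Lemmas~\ref{L1} and \ref{L3} require, so no extension issue arises.
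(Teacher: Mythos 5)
Your proposal is correct and follows essentially the same route as the paper: the first bound comes from dropping the $k=1$ term and applying Lemma~\ref{L2} to the iterates of $f_1|_I,f_2|_I$, and the second comes from applying Lemma~\ref{L3} to the homeomorphisms $L_{f_1},L_{f_2}$ with constants $K_0,K_1$ and chaining with the first bound. Your added care about working with restrictions to $I$ matches the caveat the paper itself raises before Lemma~\ref{L2}.
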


\begin{proof}
	Let 	$f_1, f_2\in  \mathcal{F}_{I}(\delta,M)$. Then for each $x\in I$, we have
	\begin{eqnarray*}
	|L_{f_1}(x)-L_{f_2}(x)|&=&\left| \sum_{k=1}^{n}\alpha_kf_1^{k-1}(x)-\sum_{k=1}^{n}\alpha_kf_2^{k-1}(x)\right|\\
	&\le & \sum_{k=2}^{n}\alpha_k|f_1^{k-1}(x)-f_2^{k-1}(x)|\\
	&\le & \sum_{k=2}^{n}\alpha_k\|f_1^{k-1}-f_2^{k-1}\|_I\\
	&\le& \left( \sum_{k=2}^{n}\alpha_k\left(\sum_{j=0}^{k-2}M^j\right)\right)\|f_1-f_2\|_I~~(\text{using Lemma \ref{L2}})\\
	&=& K_2\|f_1-f_2\|_I,
	\end{eqnarray*}
implying that
\begin{eqnarray}\label{10}
\|L_{f_1}-L_{f_2}\|_I\le K_2\|f_1-f_2\|_I.
\end{eqnarray}
Moreover, since $L_{f_1}, L_{f_2}:I\to I$ are homeomorphisms satisfying \eqref{8} with $0<K_0\le 1 \le  K_1$, by Lemma \ref{L3} we have
\begin{eqnarray}\label{11}
\|L_{f_1}^{-1}-L_{f_2}^{-1}\|_I\le \frac{1}{K_0}\|L_{f_1}-L_{f_2}\|_I.
\end{eqnarray}
Therefore \eqref{10} and \eqref{11} together implies that
\begin{eqnarray}\label{13}
\|L_{f_1}^{-1}-L_{f_2}^{-1}\|_I\le \frac{K_2}{K_0}\|f_1-f_2\|_I.
\end{eqnarray}
This completes the proof.
\end{proof}


\section{Existence, uniqueness and stability}\label{fp method}

In this section we give results on existence, uniqueness and stability.

\begin{thm}\label{existance}
	Let $0<\alpha_1<1$, $\alpha_k \ge 0$ for $2\le k\le n$
	 such that $\sum_{k=1}^{n}\alpha_k=1$
and	$G\in \mathcal{G}_J(K_1\delta, K_0M)$, where $J=[c,d]$, $c<d$ and $0<\delta\le 1\le M$.
 Let $K_0, K_1$ and $K_2$ be as defined  in Lemmas \ref{Lf estimate} and \ref{Lf inverse estimate}, respectively.
 If $K_2<K_0$, then \eqref{1}
	has a unique solution in $\mathcal{G}_J(\delta, M)$.
\end{thm}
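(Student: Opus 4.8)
The plan is to translate the problem via the conjugacy results of Section~2 into the polynomial-like equation \eqref{(*)} on $\mathbb{R}$, and then solve it by the Banach contraction principle. By Proposition~\ref{P3} and Proposition~\ref{P5}, it suffices to show that the equation
\[
\alpha_1 f(x)+\alpha_2 f^2(x)+\cdots+\alpha_n f^n(x)=F(x),\qquad x\in\mathbb{R},
\]
with $F:=\psi^{-1}\circ G\circ\psi$ (where $\psi(x)=e^x$), has a unique solution in $\mathcal{F}_I(\delta,M)$, where $I=\log J=[a,b]$. Since $G\in\mathcal{G}_J(K_1\delta,K_0M)$, Proposition~\ref{P5} gives $F\in\mathcal{F}_I(K_1\delta,K_0M)$; in particular $F$ fixes the endpoints $a,b$, is a homeomorphism of $I$ onto itself, and satisfies the two-sided Lipschitz bounds with constants $K_1\delta$ and $K_0M$. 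Note $K_1\delta\le\delta$ and $K_0M\ge M$ may fail in general, but the relevant consequence is $K_1\delta>0$ and $K_0M\ge 1$ is \emph{not} automatic — I will instead use the bounds directly where needed.

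The core step is to rewrite the equation as a fixed-point problem. For $f\in\mathcal{F}_I(\delta,M)$, recall $L_f(x)=\alpha_1 x+\alpha_2 f(x)+\cdots+\alpha_n f^{n-1}(x)$, so that $\sum_{k=1}^n\alpha_k f^k(x)=L_f(f(x))$. Hence \eqref{(*)} restricted to $I$ reads $L_f\circ f=F$ on $I$, i.e.\ $f=L_f^{-1}\circ F$ on $I$, using that $L_f:I\to I$ is a homeomorphism by Lemma~\ref{Lf estimate} and Lemma~\ref{Lf inverse lipschits}. This suggests defining the operator $\mathcal{T}$ on $\mathcal{F}_I(\delta,M)$ by $(\mathcal{T}f)(x)=(L_f^{-1}\circ F)(x)$ for $x\in I$, and extending $\mathcal{T}f$ to all of $\mathbb{R}$ as a bounded continuous map (for instance constant outside $I$, equal to $a$ on $(-\infty,a)$ and $b$ on $(b,+\infty)$) so that $\mathcal{T}f\in\mathcal{C}_b(\mathbb{R})$. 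I would then check that $\mathcal{T}$ maps $\mathcal{F}_I(\delta,M)$ into itself: the endpoint conditions $(\mathcal{T}f)(a)=a$, $(\mathcal{T}f)(b)=b$ follow since $F$ and $L_f^{-1}$ fix $a,b$; the range is exactly $I$; and for $x\ge y$ in $I$,
\[
(\mathcal{T}f)(x)-(\mathcal{T}f)(y)=L_f^{-1}(F(x))-L_f^{-1}(F(y))
\]
lies between $\tfrac{1}{K_1}(F(x)-F(y))$ and $\tfrac{1}{K_0}(F(x)-F(y))$ by Lemma~\ref{Lf inverse lipschits}, and then between $\tfrac{K_1\delta}{K_1}(x-y)=\delta(x-y)$ and $\tfrac{K_0M}{K_0}(x-y)=M(x-y)$ using $F\in\mathcal{F}_I(K_1\delta,K_0M)$. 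Thus $\mathcal{T}f\in\mathcal{F}_I(\delta,M)$, which is why the hypothesis $G\in\mathcal{G}_J(K_1\delta,K_0M)$ is exactly the right one.

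The final step is to show $\mathcal{T}$ is a contraction. For $f_1,f_2\in\mathcal{F}_I(\delta,M)$ and $x\in I$,
\[
|(\mathcal{T}f_1)(x)-(\mathcal{T}f_2)(x)|=|L_{f_1}^{-1}(F(x))-L_{f_2}^{-1}(F(x))|\le\|L_{f_1}^{-1}-L_{f_2}^{-1}\|_I\le\frac{K_2}{K_0}\|f_1-f_2\|_I
\]
by Lemma~\ref{Lf inverse estimate}, and outside $I$ the difference is $0$ by the chosen extension; hence $\|\mathcal{T}f_1-\mathcal{T}f_2\|\le\tfrac{K_2}{K_0}\|f_1-f_2\|$. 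Since $K_2<K_0$ by hypothesis, $\mathcal{T}$ is a contraction on the complete metric space $\mathcal{F}_I(\delta,M)$ (Proposition~\ref{Fdm complete}), so by the Banach contraction principle it has a unique fixed point $f^*$. One then verifies that $f^*$, which a priori satisfies $L_{f^*}\circ f^*=F$ only on $I$, is a genuine solution of \eqref{(*)} on all of $\mathbb{R}$: since $\mathcal{R}(f^*)=I$ and all iterates $f^{*k}$ for $k\ge 1$ take values in $I$, the identity $\sum_{k=1}^n\alpha_k f^{*k}(x)=L_{f^*}(f^*(x))$ holds for every $x\in\mathbb{R}$, and equals $F(x)$ because $f^*(x)\in I$ and the relation holds on $I$ — here one needs $F$ extended consistently, i.e.\ $F(x)=L_{f^*}(f^*(x))$ makes sense for all $x$ once we note only $F|_I$ enters. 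Finally, translating back through Propositions~\ref{P3} and~\ref{P5} yields the unique solution $g=\psi\circ f^*\circ\psi^{-1}\in\mathcal{G}_J(\delta,M)$ of \eqref{1}.

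I expect the main obstacle to be the passage from the restricted identity $L_f\circ f=F$ \emph{on $I$} to the full equation \eqref{(*)} \emph{on $\mathbb{R}$}, together with the correct handling of the extension of $\mathcal{T}f$ (and of $F$) off $I$ — this is precisely the subtlety flagged in the remark before Lemma~\ref{L2}, that $f$ is a homeomorphism only on $I$ and not on $\mathbb{R}$, so $L_f^{-1}$ and the inverse-estimate lemmas are only available on $I$. One must argue that because every point of $\mathbb{R}$ is mapped by $f^*$ into $I$ (as $\mathcal{R}(f^*)=I$), the behaviour of $f^*$ outside $I$ is irrelevant to the iterates $f^{*k}$ with $k\ge 2$, and the equation at a point $x\notin I$ reduces to an instance of the equation on $I$ applied at $f^*(x)$. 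The endpoint/range bookkeeping for the operator $\mathcal{T}$ is routine given Lemmas~\ref{Lf estimate}--\ref{Lf inverse estimate}, and the contraction estimate is immediate from Lemma~\ref{Lf inverse estimate} and the hypothesis $K_2<K_0$.
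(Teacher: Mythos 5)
Your overall strategy is the paper's: conjugate by $\psi(x)=e^x$ to the polynomial-like equation \eqref{(*)}, set up the operator $f\mapsto L_f^{-1}\circ F$, verify it preserves $\mathcal{F}_I(\delta,M)$ using Lemmas \ref{Lf estimate}--\ref{Lf inverse estimate}, get a contraction with ratio $K_2/K_0<1$, and translate back via Propositions \ref{P3} and \ref{P5}. The self-map verification and the contraction estimate are exactly as in the paper and are fine.

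However, there is a genuine gap in how you define the operator off $I$, and it is precisely at the point you yourself flag as the main obstacle. You define $\mathcal{T}f$ on $I$ and then extend it to $\mathbb{R}$ by the constants $a$ and $b$. The paper instead sets $Tf(x)=L_f^{-1}(F(x))$ for \emph{all} $x\in\mathbb{R}$; this is already well defined globally because $\mathcal{R}(F)=I$ and $L_f^{-1}$ is defined on $I$, so no separate extension is needed, and the fixed-point identity $f=L_f^{-1}\circ F$ then holds on all of $\mathbb{R}$, which is exactly equivalent to \eqref{(*)} on $\mathbb{R}$ (since $f(x)\in I$ gives $\sum_k\alpha_k f^k(x)=L_f(f(x))$). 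With your constant extension, the fixed point $f^*$ satisfies $f^*\equiv a$ on $(-\infty,a)$, so for $x<a$ all iterates equal $a$ and $\sum_k\alpha_k f^{*k}(x)=a$, whereas $F(x)$ for $x<a$ can be any value in $I$ (only $\mathcal{R}(F)=I$, $F(a)=a$, continuity and boundedness are imposed; nothing forces $F\equiv a$ to the left of $a$). Your closing verification is where this goes wrong: the identity $\sum_k\alpha_k f^{*k}(x)=L_{f^*}(f^*(x))$ does hold for all $x$, but to conclude it equals $F(x)$ you need $f^*(x)=L_{f^*}^{-1}(F(x))$ \emph{at that $x$}; "the relation holds on $I$" only gives $L_{f^*}(f^*(y))=F(y)$ for $y\in I$, and applying it at $y=f^*(x)$ produces $F(f^*(x))$, not $F(x)$. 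Likewise, $F$ is given (it is $\log G(e^x)$ for the given $G$), so it cannot be "extended consistently" to make this work. The repair is simply to adopt the global definition $Tf=L_f^{-1}\circ F$ on $\mathbb{R}$; everything else in your argument then goes through verbatim, including uniqueness (any solution in $\mathcal{F}_I(\delta,M)$ is a fixed point of $T$).
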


\begin{proof}
Let $G\in \mathcal{G}_J(K_1\delta, K_0M)$, $a:=\log c$ and $b:=\log d$. Then we obtain the interval $I=[a,b]$ with $a<b$, which satisfies $I=\log J$.
By Proposition \ref{P5}, we have $F:=\psi^{-1}\circ G\circ \psi \in  \mathcal{F}_I(K_1\delta, K_0M)$, where  $\psi (x)=e^x$.
	
Define $T:\mathcal{F}_I(\delta, M)\to \mathcal{C}_b(\mathbb{R})$ by
$$
Tf(x)=L_f^{-1}(F(x)),\quad x\in \mathbb{R}.
$$
By definitions of $F$ and $L_f$, we have $Tf(a)=a$ and $Tf(b)=b$. This implies that $I\subseteq \mathcal{R}(Tf)$. Also, since $L_f^{-1}:I\to I$, we have $\mathcal{R}(L_f^{-1})\subseteq I$, and therefore $\mathcal{R}(Tf)\subseteq I$. So, $\mathcal{R}(Tf)=I$. Further, for any $x,y\in I$ with $x\ge y$, as $F\in \mathcal{F}_I(K_1\delta, K_0M)$, we have
	\begin{eqnarray*}
		Tf(x)-Tf(y)&=&L_f^{-1}(F(x))-L_f^{-1}(F(y))\\
		&\le &\frac{1}{K_0}(F(x)-F(y))~~~~(\text{by using Lemma}~\ref{Lf inverse lipschits})\\
		&\le &\frac{1}{K_0}K_0M(x-y)\\
		&=&M(x-y)
	\end{eqnarray*}
	and
	\begin{eqnarray*}
		Tf(x)-Tf(y)&=&L_f^{-1}(F(x))-L_f^{-1}(F(y))\\
		&\ge &\frac{1}{K_1}(F(x)-F(y))~~~~(\text{again by using Lemma}~\ref{Lf inverse lipschits})\\
		&\ge &\frac{1}{K_1}K_1\delta(x-y)\\
		&=&\delta(x-y).
	\end{eqnarray*}
	Hence $Tf\in \mathcal{F}_I(\delta, M)$, which proves that $T$ is a self-map on $\mathcal{F}_I(\delta, M)$.
	
	We now prove that $T$ is a contraction. For $f_1, f_2 \in \mathcal{F}_I(\delta, M)$ and $x\in \mathbb{R}$, we have
	\begin{eqnarray}\label{4}
	|Tf_1(x)-Tf_2(x)|&=&|L_{f_1}^{-1}(F(x))-L_{f_2}^{-1}(F(x))|\nonumber\\
	&\le&\|L_{f_1}^{-1}-L_{f_2}^{-1}\|_I~~(\text{since}~F(x)\in I)\nonumber\\
	&\le &\frac{K_2}{K_0}\|f_1-f_2\|_I~~~(\text{by using Lemma}~\ref{Lf inverse estimate})\nonumber\\
	&\le &\frac{K_2}{K_0}\|f_1-f_2\|,
	\end{eqnarray}
	implying that $\|Tf_1-Tf_2\|\le \frac{K_2}{K_0}\|f_1-f_2\|$. Since $0<K_2<K_0$, it follows that $T$ is a contraction. By Proposition \ref{Fdm complete}, $\mathcal{F}_I(\delta, M)$ is complete, and hence by Banach's contraction principle, $T$ has a unique fixed point in $\mathcal{F}_I(\delta, M)$.  That is, there exists unique $f\in \mathcal{F}_I(\delta, M)$ such that $L_f^{-1}(F(x))=f(x), \forall x\in \mathbb{R}$, which proves that $f$ is the unique solution of \eqref{(*)} in $\mathcal{F}_I(\delta, M)$. This implies by Propositions \ref{P3} and \ref{P5} that $g:=\psi \circ f\circ\psi^{-1}$ is the unique solution of \eqref{1} in $\mathcal{G}_J(\delta, M)$.
The proof is completed.
\end{proof}

Although Lemmas \ref{Lf estimate}, \ref{Lf inverse lipschits} and \ref{Lf inverse estimate} are true for $\alpha_1\in [0,1]$, in Theorem \ref{existance} we have assumed that $\alpha_1\in (0,1)$ for the following reason: If $\alpha_1=1$, then $g=G$ is the unique solution of \eqref{1} so that the problem is trivial. On the other hand, if $\alpha_1=0$, then we have $K_0=\sum_{k=2}^{n}\alpha_k\delta^{k-1}$. So, the condition (in Theorem \ref{existance}) $K_2<K_0$ is not satisfied, because $$K_2= \sum_{k=2}^{n}\alpha_k\left(\sum_{j=0}^{k-2}M^j\right)>\sum_{k=2}^{n}\alpha_k\ge \sum_{k=2}^{n}\alpha_k\delta^{k-1}=K_0.$$
 Thus this theorem is not true for $\alpha_1=0$. In particular,
 one cannot solve the iterative root problem $g^n=G$ on $\mathbb{R}_+$ using this theorem.

\begin{coro}\label{C1}
	In addition to the assumptions of Theorem \ref{existance}, suppose that $G|_J={\rm id}$. If $K_2<K_0$, then $G$ is the unique solution of \eqref{1} in  $\mathcal{G}_J(\delta, M)$.
\end{coro}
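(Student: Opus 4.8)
The plan is to reduce Corollary \ref{C1} directly to Theorem \ref{existance} by exhibiting $G$ itself as a solution of \eqref{1} in $\mathcal{G}_J(\delta,M)$ and then invoking the uniqueness clause already proved. So the first thing I would do is observe that under the extra hypothesis $G|_J=\mathrm{id}$, Proposition \ref{P1} (with $M$ and $\delta$ playing no special role here, since we only need $G|_J=\mathrm{id}$) guarantees $G\in\mathcal{G}_J(\delta,M)$: indeed for $x\ge y$ in $J$ we have $g(x)/g(y)=x/y$, which lies between $(x/y)^\delta$ and $(x/y)^M$ precisely because $0<\delta\le 1\le M$; moreover $\mathcal{R}(G)=J$, $G(c)=c$, $G(d)=d$. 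Hence $G\in\mathcal{G}_J(\delta,M)$.

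Next I would verify that $G$ actually satisfies \eqref{1}. For $x\in J$ we have $g^k(x)=x$ for every $k\ge 1$ since $g|_J=\mathrm{id}$, so
\begin{eqnarray*}
(g(x))^{\alpha_1}(g^2(x))^{\alpha_2}\cdots(g^n(x))^{\alpha_n}
= x^{\alpha_1}x^{\alpha_2}\cdots x^{\alpha_n}
= x^{\sum_{k=1}^n\alpha_k}=x^1=x=G(x),
\end{eqnarray*}
using $\sum_{k=1}^n\alpha_k=1$ and $G|_J=\mathrm{id}$. (One should note $x>0$ on $J\subseteq\mathbb{R}_+$, so all the real powers are well defined.) Thus $G$ is a solution of \eqref{1} lying in $\mathcal{G}_J(\delta,M)$.

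Finally, Theorem \ref{existance}, whose hypotheses are all still in force by assumption (the conditions on the $\alpha_k$, the requirement $G\in\mathcal{G}_J(K_1\delta,K_0M)$, and $K_2<K_0$), asserts that \eqref{1} has a \emph{unique} solution in $\mathcal{G}_J(\delta,M)$. Since $G$ is such a solution, it must be \emph{the} solution, which is exactly the claim. One small point worth checking carefully, and the only place I see any potential friction, is that the added hypothesis $G|_J=\mathrm{id}$ is consistent with the standing hypothesis $G\in\mathcal{G}_J(K_1\delta,K_0M)$ used by Theorem \ref{existance}: by Proposition \ref{P1} applied with the pair $(K_1\delta,K_0M)$, consistency holds automatically when $K_1\delta\le 1\le K_0M$ (and in the boundary cases $K_1\delta=1$ or $K_0M=1$ the set $\mathcal{G}_J(K_1\delta,K_0M)$ is by definition exactly $\{g:g|_J=\mathrm{id}\}$, so $G$ still qualifies); in any event the corollary is stated as an addition to the assumptions of Theorem \ref{existance}, so we may simply take that membership as given. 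Hence no obstacle of substance arises, and the proof is a short three-line deduction once the membership $G\in\mathcal{G}_J(\delta,M)$ and the verification of \eqref{1} are recorded.
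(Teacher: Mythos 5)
Your proof takes essentially the same route as the paper, whose entire argument is the observation that $G$ is itself a solution of \eqref{1} in $\mathcal{G}_J(\delta,M)$ followed by an appeal to the uniqueness clause of Theorem \ref{existance}. The one detail to tighten is that you verify \eqref{1} only for $x\in J$, whereas the equation must hold for all $x\in\mathbb{R}_+$; this is immediate, since $\mathcal{R}(G)=J$ and $G|_J=\mathrm{id}$ give $G^k(x)=G(x)$ for every $k\ge 1$ and every $x\in\mathbb{R}_+$, whence $\prod_{k=1}^{n}(G^k(x))^{\alpha_k}=G(x)^{\sum_{k=1}^{n}\alpha_k}=G(x)$.
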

\begin{proof}
	Follows by Theorem \ref{existance}, because clearly $G$ is a solution of \eqref{1} in $\mathcal{G}_J(\delta, M)$.
\end{proof}


Under the assumptions of Theorem \ref{existance}, we will show that the solution obtained depends continuously on the function $G$.
More precisely, we have the following.

\begin{thm}\label{stability}
	In addition to assumptions of Theorem \ref{existance}, suppose that 	$G_1\in \mathcal{G}_J(K_1\delta, K_0M)$ and $g_1 \in \mathcal{G}_J(\delta, M)$ satisfy that
$\prod_{k=1}^{n}(g_1^{k}(x))^{\alpha_k}=G_1(x)$ for all $x\in \mathbb{R}_+$.
	Then
	\begin{eqnarray}\label{18}
\|g-g_1\|\le \frac{d}{c(K_0-K_2)}\|G-G_1\|.
	\end{eqnarray}
\end{thm}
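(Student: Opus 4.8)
The plan is to transfer the problem to the conjugated polynomial-like equation via Propositions \ref{P3} and \ref{P5}, estimate the distance between the two solutions there, and then translate the bound back. First I would set $a:=\log c$, $b:=\log d$, $I:=[a,b]=\log J$, and let $F:=\psi^{-1}\circ G\circ\psi$, $F_1:=\psi^{-1}\circ G_1\circ\psi$, $f:=\psi^{-1}\circ g\circ\psi$, $f_1:=\psi^{-1}\circ g_1\circ\psi$, where $\psi(x)=e^x$. By Proposition \ref{P5}, $F,F_1\in\mathcal{F}_I(K_1\delta,K_0M)$ and $f,f_1\in\mathcal{F}_I(\delta,M)$, and by Proposition \ref{P3} these are the corresponding solutions of \eqref{(*)}, so $f=L_f^{-1}\circ F$ and $f_1=L_{f_1}^{-1}\circ F_1$ on $\mathbb{R}$ (this is exactly the fixed-point identity from the proof of Theorem \ref{existance}).

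The core estimate is to bound $\|f-f_1\|$. For $x\in\mathbb{R}$, write
\begin{eqnarray*}
|f(x)-f_1(x)| &=& |L_f^{-1}(F(x))-L_{f_1}^{-1}(F_1(x))| \\
&\le& |L_f^{-1}(F(x))-L_{f_1}^{-1}(F(x))| + |L_{f_1}^{-1}(F(x))-L_{f_1}^{-1}(F_1(x))|.
\end{eqnarray*}
The first term is at most $\|L_f^{-1}-L_{f_1}^{-1}\|_I\le \frac{K_2}{K_0}\|f-f_1\|_I$ by Lemma \ref{Lf inverse estimate} (using $F(x)\in I$). For the second term, since $L_{f_1}^{-1}$ is Lipschitz with constant $1/K_0$ by Lemma \ref{Lf inverse lipschits} (applicable as $F(x),F_1(x)\in I$), it is at most $\frac{1}{K_0}|F(x)-F_1(x)|\le \frac{1}{K_0}\|F-F_1\|$. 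Taking the supremum over $x$ (noting $\|f-f_1\|_I\le\|f-f_1\|$) gives $\|f-f_1\|\le \frac{K_2}{K_0}\|f-f_1\| + \frac{1}{K_0}\|F-F_1\|$, hence, since $K_2<K_0$,
\begin{eqnarray*}
\|f-f_1\|\le \frac{1}{K_0-K_2}\|F-F_1\|.
\end{eqnarray*}

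It remains to convert back to the original norms on $\mathbb{R}_+$. Here I would observe that the relevant values of $g,g_1$ lie in $J=[c,d]$ and those of $G,G_1$ in $J$ as well (ranges of maps in $\mathcal{G}_J$). For $x\in J$ with $t:=\log x\in I$, $|f(t)-f_1(t)|=|\log g(x)-\log g_1(x)|$, and by the mean value theorem applied to $\log$ on the interval between $g(x)$ and $g_1(x)$ (both in $[c,d]$), $|\log g(x)-\log g_1(x)|\ge \frac{1}{d}|g(x)-g_1(x)|$; similarly $|\log G(x)-\log G_1(x)|\le \frac{1}{c}|G(x)-G_1(x)|$. Thus $\|g-g_1\|=\|g-g_1\|_J\le d\|f-f_1\|$ and $\|F-F_1\|=\|F-F_1\|_I\le \frac{1}{c}\|G-G_1\|_J=\frac{1}{c}\|G-G_1\|$, and combining with the displayed bound yields \eqref{18}. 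The main obstacle is being careful about which norm is used where: the fixed-point estimate naturally lives with the sup over all of $\mathbb{R}$, but the passage through $\log$ (and its Lipschitz constants $1/c$, $1/d$) only makes sense on $J$, so one must check that restricting to $J$ loses nothing — which holds because $g,g_1$ take values in $J$ and $G,G_1$ are determined on $J$ through their ranges, while the Lipschitz/monotonicity constants used for $L_f^{-1}$ only ever see arguments $F(x),F_1(x)\in I$.
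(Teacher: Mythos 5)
Your proposal is correct and follows essentially the same route as the paper: the same fixed-point identities $f=L_f^{-1}\circ F$, $f_1=L_{f_1}^{-1}\circ F_1$, the same triangle-inequality split estimated via Lemmas \ref{Lf inverse lipschits} and \ref{Lf inverse estimate} to get $\|f-f_1\|\le\frac{1}{K_0-K_2}\|F-F_1\|$, and the same Lipschitz constants $d$ and $1/c$ to pass back (the paper phrases these as Lipschitz bounds for $e^x$ on $I$ and $\log$ on $J$ rather than your mean value theorem statement, which is the same estimate). The only nitpick is that you should not assert $\|g-g_1\|=\|g-g_1\|_J$ or $\|F-F_1\|=\|F-F_1\|_I$; what you actually need, and what your own parenthetical reasoning already gives, is that for every $x\in\mathbb{R}_+$ the values $g(x),g_1(x)$ lie in $J$ (and $G(e^x),G_1(e^x)\in J$ for every $x\in\mathbb{R}$), so the pointwise bounds hold on the whole domain and the suprema over $\mathbb{R}_+$ and $\mathbb{R}$ are controlled directly.
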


\begin{proof}
	Given $G, G_1, g$ and $g_1$ as above, let $F(x)=\log G(e^x), F_1(x)=\log G_1(e^x), f(x)=\log g(e^x)$ and $f_1(x)=\log g_1(e^x), \forall x\in \mathbb{R}$.
	Since $G, G_1 \in \mathcal{G}_J(K_1\delta, K_0M)$, by Proposition \ref{P5}, we have $F, F_1\in \mathcal{F}_I(K_1\delta, K_0M)$, where $I=[a,b]$ such that $a=\log c$ and $b=\log d$.
Using a similar argument, we see that $f, f_1\in \mathcal{F}_I(\delta, M)$.
Moreover, $f$ and $f_1$ satisfy equation \eqref{(*)} and the equation
\begin{eqnarray*}\label{12}
		\sum_{k=1}^{n}\alpha_k f_1^k(x)=F_1(x), \quad x\in \mathbb{R},
	 \end{eqnarray*}
	 respectively, implying that $L_f^{-1} (F(x))=f(x)$ and $L_{f_1}^{-1} (F_1(x))=f_1(x), \forall x\in \mathbb{R}$.
	Therefore, for each $x\in \mathbb{R}$,
	\begin{eqnarray*}
	|f(x)-f_1(x)|&=& |L_f^{-1} (F(x))-L_{f_1}^{-1} (F_1(x))|\\
	&\le & |L_f^{-1} (F(x))-L_{f_1}^{-1} (F(x))|+|L_{f_1}^{-1} (F(x))-L_{f_1}^{-1} (F_1(x))|\\
	&\le & \|L_f^{-1}-L_{f_1}^{-1}\|_I+\frac{1}{K_0} |F(x)-F_1(x)|~~(\text{using}~\eqref{9})\\
	&\le &  \frac{K_2}{K_0}\|f-f_1\|_I+ \frac{1}{K_0}\|F-F_1\|_I~~(\text{using}~\eqref{13})\\
	&\le &  \frac{K_2}{K_0}\|f-f_1\|+ \frac{1}{K_0}\|F-F_1\|,
	\end{eqnarray*}
and hence
\begin{eqnarray*}
\|f-f_1\|	\le   \frac{K_2}{K_0}\|f-f_1\|+ \frac{1}{K_0}\|F-F_1\|.
\end{eqnarray*}
Since $K_2<K_0$, the above inequality shows
	\begin{eqnarray}\label{14}
\|f-f_1\|\le \frac{1}{K_0-K_2}\|F-F_1\|.
\end{eqnarray}
Since the map $x\mapsto e^x$ is continuously differentiable on $I$ with bounded derivative, it is a Lipschitzian
map on $I$. In fact,
\begin{eqnarray*}
|e^x-e^y|<e^b|x-y|, \quad \forall x, y\in I.
\end{eqnarray*}
So, for each $x\in \mathbb{R}_+$, we have
\begin{eqnarray*}
|g(x)-g_1(x)|&=&|e^{f(\log x)}-e^{f_1(\log x)}|\\
&<&e^b|f(\log x)-f_1(\log x)|\\
&\le & e^b\|f-f_1\|,
\end{eqnarray*}
implying that
\begin{eqnarray}\label{17}
\|g-g_1\|&\le& e^b \|f-f_1\|\nonumber\\
&\le & \frac{d}{K_0-K_2} \|F-F_1\|~~(\text{using} ~\eqref{14}).
\end{eqnarray}
Since the map $x\mapsto \log x$ is continuously differentiable on $J$ with bounded derivative, it is a Lipschitzian
map on $J$. In fact,
\begin{eqnarray*}
|\log x-\log y|<\frac{1}{c}|x-y|, \quad \forall x, y\in J.
\end{eqnarray*}
Therefore, for each $x\in \mathbb{R}$, we have
\begin{eqnarray*}
|F(x)-F_1(x)|&=&|\log G(e^x)-\log G_1(e^x)|\\
&\le &\frac{1}{c}|G(e^x)-G_1(e^x)|\\
&\le &\frac{1}{c}\|G-G_1\|,
\end{eqnarray*}
implying that
\begin{eqnarray}\label{22}
\|F-F_1\|\le \frac{1}{c} \|G-G_1\|.
\end{eqnarray}
Then \eqref{18} follows from \eqref{17} using \eqref{22}.
\end{proof}

The assumptions that $0< \alpha_1 <1$ and $\sum_{k=1}^{n}\alpha_k=1$ made in Theorem \ref{existance} are not strong.
In fact, if $\alpha_1>1$
 or $\sum_{k=1}^{n}\alpha_k>1$, then we can divide
 all the exponents $\alpha_k$s in \eqref{1} by $\sum_{k=1}^{n}\alpha_k$ to get the normalized equation,
but the assumptions on $G$ have to be changed suitably.
Moreover, by using the above observation, Theorem \ref{existance} and Proposition \ref{P4}, we can indeed extend the solutions of \eqref{1} on $\mathbb{R}_+$ to $\mathbb{R}_-$ whenever $\alpha_k\in \mathbb{Z}$ for $1\le k\le n$ such that $\sum_{k=1}^{n}\alpha_k$ is odd.


\section{Construction of solutions}\label{construction}

The method used in section 3 is an application of Banach's fixed point theorem, which gives an recursive algorithm
to approach the unique solution.
Unlike section 3, in this section we can use another method, sewing piece by piece, to
find more continuous solutions of \eqref{1} on the whole $\mathbb{R}_+$.

Consider \eqref{1} with real $\alpha_k$'s, $1\le k\le n$, and without loss of generality assume that $\alpha_n\ne 0$.
 Then \eqref{1} and its modified equation \eqref{(*)}
can be represented equivalently as
\begin{eqnarray}\label{lcp for g}
g^n(x)=\prod_{k=1}^{n-1}(g^k(x))^{\lambda_k} G(x)
\end{eqnarray}
and
\begin{eqnarray}\label{lcp}
f^n(x)=\sum_{k=1}^{n-1}\lambda_k f^k(x)+F(x)
\end{eqnarray}
respectively, where $\lambda_k$'s are real for $1 \le k\le n-1$. Let $\lambda:=\sum_{k=1}^{n-1}\lambda_k$. We will discuss for
$\lambda \ge 0$ and $\lambda < 0$ separately.

First, we consider the case that $\lambda \ge 0$.
In 2007, Xu and Zhang  \cite{Xu-Zhang} proved the existence of continuous solution of \eqref{lcp} on the compact interval $I$ with the assumption that $\lambda\in [0,1)$. In what follows, solving \eqref{lcp} with $\lambda\in [0,1)$ on the whole $\mathbb{R}$, we obtain solutions of
\eqref{lcp for g} on $\mathbb{R}_+$.

Let $\mathbb{I}:=|a,b|$, where $|a,b|$ denotes either an open interval $(a,b)$, a semi-closed interval $[a,b)$ or $(a,b]$, or a closed interval $[a,b]$ in $\mathbb{R}$, and one or both of the endpoints of $\mathbb{I}$ may be infinite.
Let $\mathbb{J}=|c,d|$ be an interval in $\mathbb{R}_+$, where $c$ and $d$ may be $0$ and $\infty$ respectively.
For $\zeta \in \bar{\mathbb{I}}$, the closure of $\mathbb{I}$,
$\eta \in  \bar{\mathbb{J}}$ and $\lambda\in [0,1)$, let
\begin{eqnarray*}
	R_{\zeta, \lambda}[\mathbb{R}; \mathbb{I}]
	\!\!\!\!&:=&\!\!\!\! \{f\in \mathcal{C}_b(\mathbb{R}): f|_\mathbb{I}~\mbox{is strictly increasing and satisfies {\bf (A1)} and {\bf (A2)}
	}
	\},
	\\
	S_{\eta, \lambda}[\mathbb{R}_+; \mathbb{J}]
	\!\!\!\!&:=&\!\!\!\! \{g\in \mathcal{C}_b(\mathbb{R}_+): g|_\mathbb{J}~\mbox{is strictly increasing and satisfies {\bf (B1)} and {\bf (B2)}
	}
	\},
\end{eqnarray*}
where
\begin{enumerate}
	\item [{\bf (A1)}] \quad $(f(x)-(1-\lambda)x)(\zeta-x)>0$ for $x\ne \zeta$,
	\item [{\bf (A2)}] \quad $(f(x)-(1-\lambda)\zeta)(\zeta-x)<0$ for $x\ne \zeta$,
	\item [{\bf (B1)}] \quad $(g(x)-x^{1-\lambda})(\eta-x)>0$ for $x\ne \eta$,
	\item [{\bf (B2)}] \quad $(g(x)-\eta^{1-\lambda})(\eta-x)<0$ for $x\ne \eta$.
\end{enumerate}

\begin{pro}\label{P6}
	Let $\lambda\in [0,1)$. Then a map $g\in 	S_{\eta, \lambda}[\mathbb{R}_+; \mathbb{J}]$ for $\eta \in \bar{\mathbb{J}}$ if and only if $f=\psi^{-1} \circ g\circ \psi \in 	R_{\zeta, \lambda}[\mathbb{R}; \mathbb{I}]$, where $\psi (x)=e^x$, $\zeta=\log \eta$ and $\mathbb{I}=\log (\mathbb{J})$.
\end{pro}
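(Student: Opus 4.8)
The plan is to mirror the proof of Proposition \ref{P5}, exploiting that $\psi(x)=e^{x}$ is a strictly increasing homeomorphism of $\mathbb{R}$ onto $\mathbb{R}_{+}$ whose inverse $\psi^{-1}=\log$ is also strictly increasing. First I would set up the dictionary between the two sides: $g=\psi\circ f\circ\psi^{-1}$, equivalently $f(x)=\log g(e^{x})$ and $g(x)=e^{f(\log x)}$; the interval correspondence $x\in\mathbb{I}\iff e^{x}\in\mathbb{J}$ (so that $\mathbb{I}=\log(\mathbb{J})=|\log c,\log d|$, with an endpoint $-\infty$ or $+\infty$ exactly when $c=0$ or $d=\infty$); the closure correspondence $\zeta=\log\eta\in\bar{\mathbb{I}}\iff\eta\in\bar{\mathbb{J}}$, since a homeomorphism carries closures to closures; and the power identities $x^{1-\lambda}=e^{(1-\lambda)\log x}$ and $\eta^{1-\lambda}=e^{(1-\lambda)\zeta}$. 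Continuity and boundedness of $f$ on $\mathbb{R}$ follow from those of $g$ on $\mathbb{R}_{+}$ exactly as in Proposition \ref{P5}, since $\psi$ is a homeomorphism, and conversely; I would then transfer the strict monotonicity by noting that $f|_{\mathbb{I}}=\psi^{-1}\circ(g|_{\mathbb{J}})\circ\psi|_{\mathbb{I}}$ is a composition whose outer and inner factors are strictly increasing, hence $f|_{\mathbb{I}}$ is strictly increasing if and only if $g|_{\mathbb{J}}$ is.

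The core of the argument is the equivalence of the sign conditions, and here the point is that it is the strict monotonicity of the exponential, rather than any Lipschitz estimate as in Proposition \ref{P5}, that converts the multiplicative conditions {\bf (B1)}, {\bf (B2)} into the additive ones {\bf (A1)}, {\bf (A2)}. For $x\in\mathbb{J}$ put $t=\log x\in\mathbb{I}$. Then $g(x)-x^{1-\lambda}=e^{f(t)}-e^{(1-\lambda)t}$ and $\eta-x=e^{\zeta}-e^{t}$; since $s\mapsto e^{s}$ is strictly increasing, the sign of $e^{f(t)}-e^{(1-\lambda)t}$ equals that of $f(t)-(1-\lambda)t$, the sign of $e^{\zeta}-e^{t}$ equals that of $\zeta-t$, and $x\ne\eta\iff t\ne\zeta$. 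Hence {\bf (B1)} holds for $g$ on $\mathbb{J}$ if and only if $(f(t)-(1-\lambda)t)(\zeta-t)>0$ for all $t\in\mathbb{I}$ with $t\ne\zeta$, i.e.\ {\bf (A1)} holds for $f$ on $\mathbb{I}$. Similarly $g(x)-\eta^{1-\lambda}=e^{f(t)}-e^{(1-\lambda)\zeta}$ has the sign of $f(t)-(1-\lambda)\zeta$, so {\bf (B2)} for $g$ is equivalent to {\bf (A2)} for $f$.

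Combining the continuity/boundedness, monotonicity, and the two sign equivalences yields $g\in S_{\eta,\lambda}[\mathbb{R}_{+};\mathbb{J}]\iff f\in R_{\zeta,\lambda}[\mathbb{R};\mathbb{I}]$, and the converse direction is obtained identically by writing $g=\psi\circ f\circ\psi^{-1}$ and running the same computations backwards. I do not anticipate a genuine obstacle; the only points requiring care are the bookkeeping of the endpoints and closures of $\mathbb{I}=\log(\mathbb{J})$ (including the infinite endpoints allowed in the definition of $\mathbb{I}$) and the correct use of the identities $x^{1-\lambda}=e^{(1-\lambda)\log x}$, $\eta^{1-\lambda}=e^{(1-\lambda)\zeta}$ when rewriting {\bf (B1)} and {\bf (B2)}.
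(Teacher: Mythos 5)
Your proposal is correct and follows essentially the same route as the paper: both transfer strict monotonicity of $g|_{\mathbb{J}}$ to $f|_{\mathbb{I}}$ via the conjugacy $f=\psi^{-1}\circ g\circ\psi$, and both convert the multiplicative conditions {\bf (B1)}, {\bf (B2)} into the additive ones {\bf (A1)}, {\bf (A2)} using the strict monotonicity of $\exp$/$\log$ (the paper phrases this as a two-case comparison of $y$ with $\eta$, you phrase it as sign preservation, which is the same argument). No gaps.
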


\begin{proof}
	Let $g\in 	S_{\eta, \lambda}[\mathbb{R}_+; \mathbb{J}]$, where $\eta \in \bar{\mathbb{J}}$. Since $f|_\mathbb{I}=\psi^{-1}\circ (g|_\mathbb{J})\circ \psi$ and $g|_\mathbb{J}$ is strictly increasing, clearly $f|_\mathbb{I}$ is also strictly increasing. In order to prove that $f$ satisfies condition {\bf (A1)}, consider any $x\in \mathbb{I}$ such that $x\ne \zeta$. Let $y\in \mathbb{J}$ be such that $x=\log y$. Then either $g(y)< y^{1-\lambda}$ or  $g(y)> y^{1-\lambda}$ according as either $y>\eta$ or $y<\eta$, respectively. This implies that either $f(x)<(1-\lambda)x$ or $f(x)>(1-\lambda)x$ according as ether $x>\zeta$ or $x<\zeta$, respectively.  In any case, we have $(f(x)-(1-\lambda)x)(\zeta-x)>0$. By a similar argument, using condition {\bf (B2)} for $g$, we can prove that $f$ satisfies condition {\bf (A2)}.  Hence $f\in 	R_{\zeta, \lambda}[\mathbb{R}; \mathbb{I}]$.  The converse follows similarly.
\end{proof}

\begin{lm}{\rm (\cite{Xu-Zhang})} \label{L6}
	Let $\lambda\in [0,1)$ and $F\in R_{a, \lambda}[\mathbb{I}; \mathbb{I}]$.
Then for arbitrary $x_0\in (a,b|$, \eqref{lcp} has a solution in $R_{a, 0}[I_1; I_1]$, where $I_1=[a,x_0]$. More concretely, for every arbitrary $x_0\in (a, b|$, there exists a strictly decreasing sequence $(x_1, x_2, \ldots, x_{n-1})$ in $(a, x_0)$ such that the
	sequence $(x_m)$ defined recursively by
	\begin{eqnarray}\label{x_n}
	x_{n+m}=\sum_{j=1}^{n-1}\lambda_j x_{j+m}+F(x_{m})\quad \text{for}~m\ge 0
	\end{eqnarray}
	satisfies the conditions {\rm \bf (i)} $x_{m+1}\in  (a, x_m)$ for $m\ge 1$, { \rm \bf (ii)} $(a,x_0]=\bigcup\limits_{m=1}^{\infty}[x_m, x_{m-1}]$,
	and
	\begin{eqnarray*}
		f(x):=\left\{\begin{array}{cll}
			a&\text{if}&x=a,\\
			f_m(x)&\text{if}&x\in [x_m, x_{m-1}], ~m\ge 1
		\end{array}\right.
	\end{eqnarray*}
	is a solution of \eqref{lcp} in $R_{a, 0}[I_1; I_1]$, where $f_j:[x_j, x_{j-1}]\to [x_{j+1}, x_j]$ is an arbitrary order-preserving homeomorphism for $1\le j\le n-1$ and $f_m: [x_m, x_{m-1}]\to [x_{m+1}, x_m]$ is the order-preserving homeomorphism defined recursively by
	\begin{eqnarray*}
		f_m(x)&=&\lambda_{n-1} x+\lambda_{n-2}f_{m-1}^{-1}(x)+\cdots+\lambda_1f_{m-n+2}^{-1}\circ f_{m-n+3}^{-1}\circ \cdots \circ f_{m-1}^{-1}(x)\\
		& &+ F\circ f_{m-n+1}^{-1}\circ f_{m-n+2}^{-1}\circ \cdots \circ f_{m-1}^{-1}(x), \quad  x\in [x_m, x_{m-1}], ~~\mbox{ for } m\ge n.
	\end{eqnarray*}
\end{lm}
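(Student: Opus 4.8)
The statement is quoted from \cite{Xu-Zhang}, and the natural route is the ``sewing piece by piece'' construction, which I would organize as follows. First, unpack the hypothesis: $F\in R_{a,\lambda}[\mathbb{I};\mathbb{I}]$ means precisely that $F$ is strictly increasing on $\mathbb{I}$ and
\begin{eqnarray*}
(1-\lambda)a<F(x)<(1-\lambda)x\qquad\text{for every }x\in(a,b|,
\end{eqnarray*}
so in particular $F(x)\to(1-\lambda)a$ as $x\to a^{+}$. Next, rewrite \eqref{lcp} so that its recursive nature is visible: if $f|_{I_1}$ is an increasing homeomorphism of $I_1=[a,x_0]$ with $f(x)<x$ on $(a,x_0]$, then putting $u=f^{n-1}(x)$ turns \eqref{lcp} into
\begin{eqnarray*}
f(u)=\lambda_{n-1}u+\lambda_{n-2}f^{-1}(u)+\cdots+\lambda_{1}(f^{-1})^{n-2}(u)+F\!\left((f^{-1})^{n-1}(u)\right),
\end{eqnarray*}
which shows that the value of $f$ on an interval $[x_m,x_{m-1}]$ is completely determined by its values on the $n-1$ preceding intervals. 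This identity is both the origin of the recursion \eqref{x_n} for the breakpoints and the origin of the recursive formula for $f_m$ in the statement.

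The second step is to produce the breakpoints. Fix $x_0\in(a,b|$; I would choose $x_0>x_1>\cdots>x_{n-1}>a$ and then define $(x_m)_{m\ge0}$ by \eqref{x_n}, the goal being to arrange that the whole sequence is strictly decreasing and tends to $a$. For the base step one checks, using $F(x_0)<(1-\lambda)x_0$ and $\lambda\in[0,1)$, that $a<\lambda x_0+F(x_0)<x_0$; then choosing $x_1,\dots,x_{n-1}$ close enough to $x_0$ makes $x_n=\sum_{j=1}^{n-1}\lambda_j x_j+F(x_0)$ fall strictly between $a$ and $x_{n-1}$. For the inductive step one subtracts consecutive instances of \eqref{x_n} to express the gap $x_{n+m-1}-x_{n+m}$ through the preceding gaps and an increment of $F$: the upper bound $x_{n+m}<x_{n+m-1}$ comes from $F(x_m)<(1-\lambda)x_m$, the lower bound $x_{n+m}>a$ from $F(x_m)>(1-\lambda)a$ together with $x_{j+m}>a$, with the constraints on $\lambda$ and $F$ ensuring that the $F$-increment controls the combination $\sum_j\lambda_j(x_{j+m-1}-x_{j+m})$ of the (possibly sign-mixed) gap terms, so that by induction the sequence stays strictly decreasing in $(a,x_0)$. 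Being decreasing and bounded below by $a$, it converges to some $\ell\ge a$; if $\ell>a$ then passing to the limit in \eqref{x_n} gives $F(\ell)=(1-\lambda)\ell$, contradicting $F(\ell)<(1-\lambda)\ell$, so $\ell=a$. This gives (i) and (ii).

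The third step is the assembly. By (i) every $[x_j,x_{j-1}]$ and $[x_{j+1},x_j]$ is nondegenerate, so for $1\le j\le n-1$ one picks an arbitrary order-preserving homeomorphism $f_j:[x_j,x_{j-1}]\to[x_{j+1},x_j]$, and for $m\ge n$ one defines $f_m$ by the displayed recursion. An induction on $m$ shows that $f_{m-1},\dots,f_{m-n+1}$ are order-preserving homeomorphisms of the relevant subintervals (so all inverses and the composition with $F$ are meaningful), that $f_m$ is continuous and strictly increasing, and that $f_m(x_m)=x_{m+1}$ and $f_m(x_{m-1})=x_m$---the latter two equalities being exactly \eqref{x_n} evaluated at the endpoints---so $f_m$ is again an order-preserving homeomorphism $[x_m,x_{m-1}]\to[x_{m+1},x_m]$. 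Gluing the $f_m$ at the shared endpoints $x_m$ and setting $f(a)=a$ produces a continuous $f:I_1\to I_1$ with $a<f(x)<x$ on $(a,x_0]$ (because $f([x_m,x_{m-1}])=[x_{m+1},x_m]$), hence $f\in R_{a,0}[I_1;I_1]$. Finally, to verify \eqref{lcp}: for $x\in[x_m,x_{m-1}]$ one has $f^k(x)\in[x_{m+k},x_{m+k-1}]$, so $f^n(x)=f_{m+n-1}\!\left(f^{n-1}(x)\right)$ with $m+n-1\ge n$; substituting $y=f^{n-1}(x)$ into the recursion defining $f_{m+n-1}$ collapses the iterated inverses $f_{m+n-2}^{-1}(y),\,f_{m+n-3}^{-1}\!\circ\! f_{m+n-2}^{-1}(y),\dots$ successively to $f^{n-2}(x),f^{n-3}(x),\dots,x$, so the identity reads $f^n(x)=\sum_{k=1}^{n-1}\lambda_k f^k(x)+F(x)$; this holds on $(a,x_0]$, extends to the endpoints by continuity, and is valid at $x=a$ as well since $F(x)\to(1-\lambda)a$ gives $a=\lambda a+(1-\lambda)a$.

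I expect the genuine difficulty to sit in the second step together with the monotonicity assertion in the third: one must choose the breakpoints $x_1,\dots,x_{n-1}$ (and, when the $\lambda_k$ are sign-mixed, also constrain the otherwise arbitrary homeomorphisms $f_1,\dots,f_{n-1}$, for instance through bounds on their slopes) so that simultaneously the generated sequence $(x_m)$ strictly decreases to $a$ and every forced piece $f_m$ comes out strictly increasing. This is exactly where the hypothesis $\lambda=\sum_{k=1}^{n-1}\lambda_k\in[0,1)$---rather than the weaker $\lambda_k\ge0$---and the full strength of $F\in R_{a,\lambda}[\mathbb{I};\mathbb{I}]$ are used, and it is the technical heart carried out in \cite{Xu-Zhang}.
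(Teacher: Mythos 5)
The paper does not prove Lemma \ref{L6} at all --- it is quoted as a known result from \cite{Xu-Zhang} --- so there is no internal proof to compare against. Your outline is a faithful reconstruction of the piecewise (``sewing'') construction used in that reference: the reformulation via $u=f^{n-1}(x)$ that produces the recursion for $f_m$, the generation of the breakpoints by \eqref{x_n} together with the limit argument forcing $x_m\to a$, and the endpoint identities $f_m(x_{m-1})=x_m$, $f_m(x_m)=x_{m+1}$ are exactly the right ingredients, and you correctly isolate the one genuinely technical point (keeping $(x_m)$ strictly decreasing and every forced piece $f_m$ increasing, which is delicate when the $\lambda_k$ are not all nonnegative) as the part whose details must be taken from \cite{Xu-Zhang}.
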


\begin{lm}{\rm (\cite{Xu-Zhang})} \label{L7}
	Let $\lambda\in [0,1)$ and $F\in R_{b, \lambda}[\mathbb{I}; \mathbb{I}]$. Then for every arbitrary $x_0\in |a,b)$, \eqref{lcp} has a solution in $R_{b, 0}[I_2; I_2]$, where $I_2=[x_0,b]$. More concretely, for every arbitrary $x_0\in |a, b)$, there exists a strictly increasing sequence $(x_1, x_2, \ldots, x_{n-1})$ in $(x_0, b)$ such that the
	sequence $(x_m)$ defined recursively by \eqref{x_n} satisfies the conditions {\rm \bf(i)} $x_{m+1}\in (x_m,b)$ for $m\ge 1$, {\rm \bf (ii)} $[x_0,b)=\bigcup\limits_{m=1}^{\infty}[x_{m-1}, x_m]$,  and
	\begin{eqnarray*}
		f(x):=\left\{\begin{array}{cll}
			f_m(x)&\text{if}&x\in [x_{m-1}, x_m], ~m\ge 1,
\\
			b&\text{if}&x=b,\\
		\end{array}\right.
	\end{eqnarray*}
	is a solution of \eqref{lcp} in $R_{b, 0}[I_2; I_2]$, where $f_j:[x_{j-1}, x_j]\to [x_j, x_{j+1}]$ is an arbitrary order-preserving homeomorphism for $1\le j \le n-1$ and $f_m: [x_{m-1}, x_m]\to [x_m, x_{m+1}]$ is the order-preserving homeomorphism defined recursively by
		\begin{eqnarray*}
		f_m(x)&=&\lambda_{n-1} x+\lambda_{n-2}f_{m-1}^{-1}(x)+\cdots+\lambda_1f_{m-n+2}^{-1}\circ f_{m-n+3}^{-1}\circ \cdots \circ f_{m-1}^{-1}(x)\\
		& &+ F\circ f_{m-n+1}^{-1}\circ f_{m-n+2}^{-1}\circ \cdots \circ f_{m-1}^{-1}(x), \quad x\in [x_{m-1}, x_m], ~~\mbox{ for } m\ge n.
	\end{eqnarray*}
\end{lm}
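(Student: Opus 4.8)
The plan is to carry out the mirror image of the piece-by-piece sewing construction used for Lemma~\ref{L6}. Since the roles of the two endpoints are now interchanged — $F\in R_{b,\lambda}[\mathbb{I};\mathbb{I}]$ and we build \emph{toward} $b$ on $I_2=[x_0,b]$ — the solution $f$ will again be assembled as a countable concatenation of order-preserving homeomorphisms $f_m$ between consecutive subintervals $[x_{m-1},x_m]$, the partition points $x_m$ being forced by the equation through \eqref{x_n}. When $a$ and $b$ are both finite one could instead conjugate \eqref{lcp} by the affine reflection $x\mapsto a+b-x$, which interchanges $R_{b,\lambda}$ and $R_{a,\lambda}$ and preserves the form of \eqref{lcp} after replacing $F$ by $x\mapsto(1-\lambda)(a+b)-F(a+b-x)$, and then simply quote Lemma~\ref{L6}; but since the endpoints of $\mathbb{I}$ may be infinite, I would rather run the construction directly.

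First I would fix $x_0\in|a,b)$ and locate the auxiliary points $x_1,\dots,x_{n-1}$. Because $F\in R_{b,\lambda}[\mathbb{I};\mathbb{I}]$, condition \textbf{(A1)} gives $F(x_0)>(1-\lambda)x_0$ and condition \textbf{(A2)} gives $F(x)<(1-\lambda)b$ for $x\in\mathbb{I}$ with $x<b$; both strict inequalities persist for points near $x_0$ by continuity. Hence I can choose a strictly increasing tuple $(x_1,\dots,x_{n-1})$ in $(x_0,b)$ so close to $x_0$ that the number $x_n:=\sum_{j=1}^{n-1}\lambda_jx_j+F(x_0)$ produced by \eqref{x_n} with $m=0$ satisfies $x_{n-1}<x_n<b$ — possible precisely because the limiting value $\lambda x_0+F(x_0)$ lies strictly between $x_0$ and $b$. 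Defining $x_{n+m}$ for $m\ge1$ recursively by \eqref{x_n}, a straightforward induction using that $F|_\mathbb{I}$ is increasing and $\sum_{j=1}^{n-1}\lambda_j=\lambda<1$ shows that every gap $x_m-x_{m-1}$ is positive and that $x_m<b$ for all $m$ (if $x_{m+1},\dots,x_{m+n-1}<b$ then $x_{n+m}<\lambda b+F(x_m)<\lambda b+(1-\lambda)b=b$). Thus $(x_m)$ is strictly increasing and bounded above by $b$, hence convergent to some $x^\ast$; passing to the limit in \eqref{x_n} gives $F(x^\ast)=(1-\lambda)x^\ast$, which by \textbf{(A1)} forces $x^\ast=b$, and therefore $[x_0,b)=\bigcup_{m\ge1}[x_{m-1},x_m]$. (If $b=\infty$, the same estimate shows the limit cannot be finite.)

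Next I would build $f$ piece by piece. On $[x_{j-1},x_j]$ for $1\le j\le n-1$, take any order-preserving homeomorphism $f_j$ onto $[x_j,x_{j+1}]$; for $m\ge n$, define $f_m$ on $[x_{m-1},x_m]$ by the displayed recursion for $f_m$. By induction one checks that the inner compositions $f_{m-k}^{-1}\circ\cdots\circ f_{m-1}^{-1}$ are order-preserving homeomorphisms of $[x_{m-1},x_m]$ onto $[x_{m-1-k},x_{m-k}]$, so that $f_m$ is well defined, continuous and an order-preserving homeomorphism onto $[x_m,x_{m+1}]$ (the $F$-term alone is already strictly increasing, since $F|_\mathbb{I}$ is), and that $f_m(x_{m-1})=x_m$, $f_m(x_m)=x_{m+1}$ by \eqref{x_n}, so consecutive pieces glue continuously. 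Gluing the $f_m$ (and setting $f(b)=b$ when $b$ is finite) yields a continuous, strictly increasing $f$ on $I_2$ with $x<f(x)<b$ for $x\in[x_0,b)$; that is, $f\in R_{b,0}[I_2;I_2]$.

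Finally I would verify that $f$ solves \eqref{lcp} on $I_2$. For $x\in[x_{m-1},x_m]$ with $m\ge1$, set $t:=f^{n-1}(x)\in[x_{m+n-2},x_{m+n-1}]$; then $f^n(x)=f_{m+n-1}(t)$, while for $1\le k\le n-1$ one has $f^k(x)=f_{m+k}^{-1}\circ\cdots\circ f_{m+n-2}^{-1}(t)$ (the empty composition when $k=n-1$) and $x=f_m^{-1}\circ\cdots\circ f_{m+n-2}^{-1}(t)$, so that after substitution the equation $f^n(x)=\sum_{k=1}^{n-1}\lambda_kf^k(x)+F(x)$ becomes, term for term, the very recursion defining $f_{m+n-1}$ (note $m+n-1\ge n$); at $x=b$ it collapses to $(1-\lambda)b=F(b)$, which holds by continuity from \textbf{(A1)}--\textbf{(A2)}. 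The step I expect to be the real obstacle is the choice of $(x_1,\dots,x_{n-1})$ in the first stage — arranging that the nonlinear recursion \eqref{x_n} genuinely produces a sequence that is increasing and stays strictly below $b$ — together with the index bookkeeping in the last two stages needed to confirm that the recursively built $f_m$ concatenate into a continuous map reproducing \eqref{lcp}.
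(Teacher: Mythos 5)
The paper does not actually prove this lemma; it is quoted from \cite{Xu-Zhang}, so there is no in-house argument to compare against. Your reconstruction is the right one and is essentially the piece-by-piece sewing argument of that reference: force the partition points through the recursion \eqref{x_n}, choose the first $n-1$ pieces freely, define the later pieces by the displayed recursion, and verify the equation by unwinding the iterates through inverses of consecutive pieces. Your index bookkeeping checks out (with $t=f^{n-1}(x)$ one indeed gets $f^k(x)=f_{m+k}^{-1}\circ\cdots\circ f_{m+n-2}^{-1}(t)$, the recursion for $f_{m+n-1}$ reproduces \eqref{lcp} term by term, and $f_m(x_{m-1})=x_m$, $f_m(x_m)=x_{m+1}$ follow from \eqref{x_n} shifted by $n$), as do the limit argument $F(x^\ast)=(1-\lambda)x^\ast\Rightarrow x^\ast=b$ and the remark about reducing to Lemma \ref{L6} by the affine reflection when $a,b$ are finite.

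There is, however, one step that fails at the stated level of generality. Section 4 introduces the $\lambda_k$ as arbitrary reals with only $\lambda=\sum_{k=1}^{n-1}\lambda_k\in[0,1)$ assumed, and your proof silently upgrades this to $\lambda_j\ge 0$ for every $j$ in two places: (i) the bound $\sum_{j=1}^{n-1}\lambda_j x_{j+m}<\lambda b$ in the induction showing $x_{n+m}<b$, and the claim that positivity of the gaps $x_m-x_{m-1}$ propagates, both of which can fail for sign-changing coefficients (take $n=3$, $\lambda_1=-1$, $\lambda_2=3/2$, with $x_{1+m}$ near $x_0$ and $x_{2+m}$ near $b$); and (ii) the assertion that $f_m$ is order-preserving --- you observe that the $F$-term is increasing, but each term $\lambda_j f^{-1}\circ\cdots$ is decreasing whenever $\lambda_j<0$, so $f_m$ need not be a homeomorphism onto $[x_m,x_{m+1}]$. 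The construction theorem of \cite{Xu-Zhang} being quoted does assume $\lambda_i\ge 0$ for each $i$, so your argument is a faithful proof of the result actually intended; but you should state that hypothesis explicitly, since nothing in the lemma as transcribed here (or in the surrounding text of Section 4) rules out negative coefficients with a nonnegative sum.
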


\begin{theorem}\label{T3}
	Let $\lambda \in [0,1)$ and $G\in S_{c, \lambda}[\mathbb{R}_+; \mathbb{J}]$ such that $\mathcal{R}(G)=\mathcal{R}(G|_\mathbb{J})$,  where $\mathbb{J}=|c,d]$.
Then \eqref{lcp for g} has solutions
in $S_{c, 0}[\mathbb{R}_+; \mathbb{J}]$. Moreover, each solution depends on $n-1$ arbitrarily chosen orientation-preserving homeomorphisms $f_j:[x_j, x_{j-1}]\to [x_{j+1},x_j]$, $j=1,2,\ldots, n-1$,  where $x_0=b$ and $x_1, x_2, \ldots, x_n$ are given as in Lemma \ref{L6}.
\end{theorem}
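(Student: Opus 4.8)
The plan is to push the equation to the real line via the logarithmic conjugacy of Proposition \ref{P6}, solve the resulting polynomial-like equation \eqref{lcp} on a (possibly half-infinite) interval using Lemma \ref{L6}, extend that solution to all of $\mathbb{R}$ with the help of the hypothesis $\mathcal{R}(G)=\mathcal{R}(G|_\mathbb{J})$, and then conjugate back. Set $a=\log c$ (with $a=-\infty$ if $c=0$), $b=\log d$, $\mathbb{I}=\log\mathbb{J}=|a,b]$, $\psi(x)=e^x$, and $F=\psi^{-1}\circ G\circ\psi$. By Proposition \ref{P6} (with $\zeta=a$) we have $F\in R_{a,\lambda}[\mathbb{R};\mathbb{I}]$, and because $\psi$ is a bijection the assumption $\mathcal{R}(G)=\mathcal{R}(G|_\mathbb{J})$ translates into $\mathcal{R}(F)=\mathcal{R}(F|_\mathbb{I})$. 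Since \eqref{lcp for g} and \eqref{lcp} are merely rearrangements of \eqref{1} and \eqref{(*)} (recall $\alpha_n\ne 0$), Propositions \ref{P3} and \ref{P6} reduce the theorem to producing solutions $f\in R_{a,0}[\mathbb{R};\mathbb{I}]$ of \eqref{lcp}.

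The construction on $\mathbb{I}$ comes from Lemma \ref{L6} applied to $F|_{I_1}$ with $I_1:=[a,b]$ and $x_0=b$. One first checks that $F|_{I_1}$ meets that lemma's hypotheses (continuity, strict monotonicity on $\mathbb{I}$, and conditions (A1), (A2) with $\zeta=a$, which in particular force $F(a)=(1-\lambda)a$). The lemma then delivers, for every strictly decreasing sequence $(x_1,\dots,x_{n-1})$ in $(a,b)$ and every choice of order-preserving homeomorphisms $f_j:[x_j,x_{j-1}]\to[x_{j+1},x_j]$, $1\le j\le n-1$, a solution $f_0\in R_{a,0}[I_1;I_1]$ of \eqref{lcp} on $I_1$, where $x_0=b$, $x_n=\sum_{j=1}^{n-1}\lambda_j x_j+F(b)$, and the remaining $x_m$ are given by \eqref{x_n}. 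In particular $f_0$ is an increasing homeomorphism of $\mathbb{I}$ into itself with $f_0(x)<x$ and $f_0(x)>a$ for $x\in\mathbb{I}$, $x\ne a$.

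The essential step is extending $f_0$ to all of $\mathbb{R}$. As $F|_\mathbb{I}$ is a strictly increasing homeomorphism onto $\mathcal{R}(F|_\mathbb{I})$ and $\mathcal{R}(F)=\mathcal{R}(F|_\mathbb{I})$, the map $\phi:=(F|_\mathbb{I})^{-1}\circ F$ is well-defined and continuous from $\mathbb{R}$ into $\mathbb{I}$, with $\phi|_\mathbb{I}=\mathrm{id}$ and $F\circ\phi=F$. Put $f:=f_0\circ\phi$ on $\mathbb{R}$, so that $f|_\mathbb{I}=f_0|_\mathbb{I}$. Then $f$ is continuous, bounded above by $x_1$ (and bounded, hence in $\mathcal{C}_b(\mathbb{R})$, when $a$ is finite), and since $f(x)\in\mathbb{I}$ for every $x$ one gets $f^k(x)=f_0^{k}(\phi(x))$ for all $k\ge 1$, so that
\[
f^n(x)-\sum_{k=1}^{n-1}\lambda_k f^k(x)=f_0^{n}(\phi(x))-\sum_{k=1}^{n-1}\lambda_k f_0^{k}(\phi(x))=F(\phi(x))=F(x),
\]
i.e. $f$ solves \eqref{lcp} on $\mathbb{R}$; and $f|_\mathbb{I}=f_0|_\mathbb{I}$ is strictly increasing and satisfies (A1), (A2) with $\lambda=0$, so $f\in R_{a,0}[\mathbb{R};\mathbb{I}]$. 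Conjugating back, $g:=\psi\circ f\circ\psi^{-1}\in S_{c,0}[\mathbb{R}_+;\mathbb{J}]$ solves \eqref{lcp for g}, and the construction evidently depends on the $n-1$ arbitrary homeomorphisms $f_1,\dots,f_{n-1}$, which is the ``moreover'' part.

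I expect the extension to be the main obstacle: one has to be sure that enlarging the domain of $f_0$ from $\mathbb{I}$ to $\mathbb{R}$ preserves continuity, keeps the map in $R_{a,0}[\mathbb{R};\mathbb{I}]$, and does not destroy \eqref{lcp}. The device $f=f_0\circ\phi$ works exactly because $\mathcal{R}(G)=\mathcal{R}(G|_\mathbb{J})$ — otherwise $F$ could hit values outside $\mathcal{R}(F|_\mathbb{I})$ and no extension would be possible. A secondary technicality is the degenerate case $c=0$ ($a=-\infty$), where $F|_{I_1}$ sits on a half-line and boundedness on the $\mathbb{R}$-side must be read through the conjugacy, together with the routine check that $F|_{I_1}$ genuinely satisfies the hypotheses of Lemma \ref{L6}.
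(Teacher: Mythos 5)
Your proposal is correct and follows essentially the same route as the paper: conjugate by $\psi(x)=e^x$ to pass to \eqref{lcp}, apply Lemma \ref{L6} on $\mathbb{I}$ to obtain $\phi_1\in R_{a,0}[\mathbb{I};\mathbb{I}]$, and extend to $\mathbb{R}$ by exactly the paper's formula $f=\phi_1\circ F_1^{-1}\circ F$ (your $f_0\circ\phi$), using $\mathcal{R}(F)=\mathcal{R}(F|_\mathbb{I})$ for well-definedness before conjugating back via Propositions \ref{P3} and \ref{P6}. The verification that $f$ solves \eqref{lcp} via $f^k(x)=f_0^k(\phi(x))$ is the same computation the paper carries out.
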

\begin{proof}
	Given $G\in S_{c, \lambda}[\mathbb{R}_+; \mathbb{J}]$, by Proposition \ref{P6}, we have $F=\psi^{-1} \circ G\circ \psi \in 	R_{a, \lambda}[\mathbb{R}; \mathbb{I}]$, where $\psi (x)=e^x$ and $\mathbb{I}=\log (\mathbb{J})$. Also, since $\mathcal{R}(G)=\mathcal{R}(G|_\mathbb{J})$, it follows that $\mathcal{R}(F)=\mathcal{R}(F|_\mathbb{I})$.
	So, $F_1:=F|_\mathbb{I} \in R_{a, \lambda}[\mathbb{I}; \mathbb{I}]$. Therefore by Lemma \ref{L6}, \eqref{lcp} has a solution $\phi_1$ in $R_{a, 0}[\mathbb{I}; \mathbb{I}]$. Let $f$ be the extension of map $\phi_1$ to $\mathbb{R}$ defined by
	\begin{eqnarray}\label{extension}
	f(x)=\phi_1\circ F_1^{-1}\circ F(x), \quad x\in \mathbb{R}.
	\end{eqnarray}
We assert that $f$ is a solution of \eqref{lcp} in $R_{a, 0}[\mathbb{R}; \mathbb{I}]$.
Being a strictly increasing continuous map,  $F_1:\mathbb{I} \to \mathcal{R}(F_1)$ has the inverse $F_1^{-1}$, which is also strictly increasing and continuous on  $\mathcal{R}(F_1)$. Therefore,
as $\mathcal{R}(F)=\mathcal{R}(F_1)$, clearly $f$ is a well-defined map on $\mathbb{R}$. Also, $f$ is continuous on $\mathbb{R}$, being the composition of continuous maps $\phi_1, F_1^{-1}$ and $F$. Further,
	 as $\phi_1 \in R_{a, 0}[\mathbb{I}; \mathbb{I}]$, it follows that $f$ is strictly increasing on $\mathbb{I}$, and satisfies the conditions {\bf (A1)} and {\bf (A2)}. Therefore $f\in R_{a, 0}[\mathbb{R}; \mathbb{I}]$. Moreover, for each $x\in \mathbb{R}$,
	\begin{eqnarray*}
	f^n(x)-\sum_{k=1}^{n-1}\lambda_k f^k(x)&=& f^{n-1}(f(x))-\sum_{k=1}^{n-1}\lambda_k f^{k-1}(f(x))\\
	&=&f^{n-1}|_I(f(x))-\sum_{k=1}^{n-1}\lambda_k f^{k-1}|_I(f(x))\\
	& &~~~~~~~~~~~~~~~~~~~~~~~~(\text{since}~ f(x)\in \mathcal{R}(f)=\mathcal{R}(f|_\mathbb{I})\subseteq \mathbb{I})
\end{eqnarray*}
\begin{eqnarray*}
	&=& (f|_I)^{n-1} (f(x))-\sum_{k=1}^{n-1}\lambda_k (f|_I)^{k-1}(f(x))\\ 	
	&=& \phi_1^{n-1}(\phi_1\circ F_1^{-1}\circ F(x))-\sum_{k=1}^{n-1}\lambda_k \phi_1^{k-1}(\phi_1\circ F_1^{-1}\circ F(x))\\
	&=& \phi_1^n(F_1^{-1}\circ F(x))-\sum_{k=1}^{n-1}\lambda_k \phi_1^k(F_1^{-1}\circ F(x))\\
	&=&F_1(F_1^{-1}\circ F(x))~~~~~(\text{since}~ F_1^{-1}\circ F(x)\in \mathbb{I})\\
	&=& F(x).
	\end{eqnarray*}
Therefore $f$ is a solution of \eqref{lcp} in $R_{a,0}[\mathbb{R}; \mathbb{I}]$. Hence by Propositions \ref{P3} and \ref{P6}, $g=\psi \circ f\circ \psi^{-1}$ is a solution of \eqref{lcp for g} in $S_{c, 0}[\mathbb{R}_+; \mathbb{J}]$.
Further, by Lemma \ref{L6}, $\phi_1$ and hence $g$ depends on
$n-1$ arbitrarily chosen orientation-preserving homeomorphisms $f_j:[x_j, x_{j-1}]\to [x_{j+1},x_j]$, $j=1,2,\ldots, n-1$,  where $x_0=b$.
\end{proof}

For the other class $ S_{d, \lambda}[\mathbb{R}_+; \mathbb{J}]$, we can similarly
prove the following result using Lemma \ref{L7}.

\begin{theorem} \label{T4}
		Let $\lambda \in [0,1)$ and $G\in S_{d, \lambda}[\mathbb{R}_+; \mathbb{J}]$ such that $\mathcal{R}(G)=\mathcal{R}(G|_\mathbb{J})$,  where $\mathbb{J}=[c,d|$. Then \eqref{lcp for g}
	has solutions in $S_{d, 0}[\mathbb{R}_+; \mathbb{J}]$. Moreover, each solution depends on $n-1$ arbitrarily chosen orientation-preserving homeomorphisms  $f_j:[x_{j-1},x_j]\to [x_j,x_{j+1}]$, $j=1, 2,\ldots,n-1$,  where
	 $x_0=a$ and $x_1, x_2, \ldots, x_n$ are given as in Lemma \ref{L7}.
\end{theorem}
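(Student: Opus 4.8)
The plan is to follow the proof of Theorem~\ref{T3} almost verbatim, with Lemma~\ref{L6} replaced by Lemma~\ref{L7} and the left endpoint replaced by the right one. First I would conjugate: set $\psi(x)=e^x$, $\mathbb{I}=\log(\mathbb{J})$, $a=\log c$, $b=\log d$, and put $F:=\psi^{-1}\circ G\circ\psi$. Since $G\in S_{d,\lambda}[\mathbb{R}_+;\mathbb{J}]$ and the point $d$ corresponds under $\log$ to the \emph{right} endpoint $b$ of $\mathbb{I}$, Proposition~\ref{P6} yields $F\in R_{b,\lambda}[\mathbb{R};\mathbb{I}]$; moreover $\mathbb{J}=[c,d|$ maps to $\mathbb{I}=[a,b|$, so $\mathbb{I}$ is closed at $a$. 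The hypothesis $\mathcal{R}(G)=\mathcal{R}(G|_\mathbb{J})$ transfers to $\mathcal{R}(F)=\mathcal{R}(F|_\mathbb{I})$, so $F_1:=F|_\mathbb{I}\in R_{b,\lambda}[\mathbb{I};\mathbb{I}]$ and Lemma~\ref{L7} is applicable to $F_1$.

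Next I would invoke Lemma~\ref{L7}: for any admissible $x_0$ (here $x_0=a$) and any $n-1$ arbitrarily chosen orientation-preserving homeomorphisms $f_j:[x_{j-1},x_j]\to[x_j,x_{j+1}]$, $j=1,\dots,n-1$, it produces an increasing-type sequence $(x_m)$ satisfying \eqref{x_n} and a solution $\phi_1$ of \eqref{lcp} in $R_{b,0}[\mathbb{I};\mathbb{I}]$, built by sewing the recursively defined order-preserving homeomorphisms $f_m$ on the intervals $[x_{m-1},x_m]$ that exhaust $[x_0,b)$, together with $\phi_1(b)=b$. I would then extend $\phi_1$ to $\mathbb{R}$ precisely as in \eqref{extension}, by $f(x)=\phi_1\circ F_1^{-1}\circ F(x)$. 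Since $F_1$ is a strictly increasing continuous bijection of $\mathbb{I}$ onto $\mathcal{R}(F_1)=\mathcal{R}(F)$, the inverse $F_1^{-1}$ is continuous there, so $f$ is a well-defined continuous self-map of $\mathbb{R}$; because $F_1^{-1}\circ F$ restricts to the identity on $\mathbb{I}$, we get $f|_\mathbb{I}=\phi_1$, hence $f$ is strictly increasing on $\mathbb{I}$ and inherits conditions \textbf{(A1)} and \textbf{(A2)}, giving $f\in R_{b,0}[\mathbb{R};\mathbb{I}]$ with $\mathcal{R}(f)=\phi_1(\mathbb{I})=\mathcal{R}(f|_\mathbb{I})\subseteq\mathbb{I}$.

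The verification that $f$ solves \eqref{lcp} is then the same chain of identities as in Theorem~\ref{T3}: expanding $f^n(x)-\sum_{k=1}^{n-1}\lambda_k f^k(x)$, using $\mathcal{R}(f)=\mathcal{R}(f|_\mathbb{I})\subseteq\mathbb{I}$ to replace each iterate of $f$ by the corresponding iterate of $f|_\mathbb{I}=\phi_1$, and then the fact that $\phi_1$ solves \eqref{lcp} on $\mathbb{I}$ together with $F_1^{-1}\circ F(x)\in\mathbb{I}$, the expression collapses to $F_1(F_1^{-1}\circ F(x))=F(x)$. Finally, Propositions~\ref{P3} and~\ref{P6} turn $f$ back into $g:=\psi\circ f\circ\psi^{-1}\in S_{d,0}[\mathbb{R}_+;\mathbb{J}]$ solving \eqref{lcp for g}; and since $\phi_1$, and therefore $f$ and $g$, depends exactly on the $n-1$ homeomorphisms $f_j$ furnished by Lemma~\ref{L7}, the stated parametrization of solutions follows.

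I do not expect a genuine obstacle, as the argument is wholly parallel to Theorem~\ref{T3}. The only point demanding care is the bookkeeping of orientation under the conjugacy $\psi$: one must check that $S_{d,\lambda}$ corresponds to $R_{b,\lambda}$ rather than $R_{a,\lambda}$, that $[c,d|$ maps to $[a,b|$ with closedness at the correct endpoint so that the hypothesis ``$F_1\in R_{b,\lambda}[\mathbb{I};\mathbb{I}]$'' of Lemma~\ref{L7} is met, and that the sequence $(x_m)$ and the pieces $f_j$ used are the increasing-type ones of Lemma~\ref{L7} rather than the decreasing-type ones of Lemma~\ref{L6}. Once this matching is pinned down, the rest is routine.
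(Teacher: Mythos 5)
Your proposal is correct and is exactly the argument the paper intends: the paper states Theorem~\ref{T4} without a written proof, remarking only that it follows ``similarly'' from Lemma~\ref{L7}, and your adaptation of the proof of Theorem~\ref{T3} (conjugating $S_{d,\lambda}$ to $R_{b,\lambda}$, applying Lemma~\ref{L7} with $x_0=a$, extending via \eqref{extension}, and conjugating back) is precisely that routine modification, with the endpoint bookkeeping handled correctly.
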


In the special case that $\lambda=0$, \eqref{lcp for g} reduces to the equation
\begin{eqnarray}\label{irp}
g^n(x)=G(x),
\end{eqnarray}
i.e., the problem of iterative roots for a given function $G$. We have following results for solutions of \eqref{irp} on $\mathbb{R}_+$.

\begin{coro}\label{C2}
		Let $G$ be a continuous function on $\mathbb{R}_+$ such that $G$ is strictly increasing on $J$, $G(c)=c, G(d)< d, \mathcal{R}(G)=[c,G(d)]$  and  $G(x)<x$ for $x\in (c,d)$, where $J=[c,d]$.  Then \eqref{irp} has solutions on $\mathbb{R}_+$.
		Moreover, each solution depends on $n-1$ arbitrarily chosen orientation-preserving homeomorphisms $f_j:[x_j, x_{j-1}]\to [x_{j+1},x_j]$, $j=1,2,\ldots, n-1$,  where $x_0=b$ and $x_1, x_2, \ldots, x_n$ are given as in Lemma \ref{L6}.
\end{coro}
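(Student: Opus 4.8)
The plan is to recognize Corollary \ref{C2} as the special case $\lambda=0$ of Theorem \ref{T3}, applied with $\mathbb{J}=J=[c,d]$ and $\eta=c$, and then merely to check that the four hypotheses imposed on $G$ here are precisely what is needed to place $G$ in the class $S_{c,0}[\mathbb{R}_+;J]$ together with the range condition $\mathcal{R}(G)=\mathcal{R}(G|_J)$.

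First I would record that $G$ is automatically bounded: since $\mathcal{R}(G)=[c,G(d)]$ is a compact subinterval of $\mathbb{R}_+$, we have $G\in\mathcal{C}_b(\mathbb{R}_+)$, so the boundedness required for membership in $\mathcal{C}_b(\mathbb{R}_+)$ is not an extra assumption but a consequence of the range hypothesis. Next I would verify the defining properties of $S_{c,0}[\mathbb{R}_+;J]$ one at a time. Strict monotonicity of $G|_J$ is assumed. For condition \textbf{(B1)} with $\eta=c$ and $\lambda=0$ one needs $(G(x)-x)(c-x)>0$ for $x\in J$, $x\neq c$; as every such $x$ has $c-x<0$, this is equivalent to $G(x)<x$ on $(c,d]$, which holds because $G(x)<x$ on $(c,d)$ by hypothesis and $G(d)<d$. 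For condition \textbf{(B2)} with $\eta=c$ and $\lambda=0$ one needs $(G(x)-c)(c-x)<0$ for $x\in J$, $x\neq c$, i.e.\ $G(x)>c$ on $(c,d]$, which follows from $G(c)=c$ and the strict increase of $G$ on $[c,d]$. Finally, continuity and strict monotonicity of $G$ on $J=[c,d]$ with $G(c)=c$ give $\mathcal{R}(G|_J)=[c,G(d)]=\mathcal{R}(G)$.

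Having placed $G$ in $S_{c,0}[\mathbb{R}_+;J]$ with $\mathcal{R}(G)=\mathcal{R}(G|_J)$, I would invoke Theorem \ref{T3}: it furnishes solutions of \eqref{lcp for g} in $S_{c,0}[\mathbb{R}_+;J]$, each depending on $n-1$ arbitrarily chosen orientation-preserving homeomorphisms $f_j:[x_j,x_{j-1}]\to[x_{j+1},x_j]$, $j=1,\dots,n-1$, with $x_0=b$ and $x_1,\dots,x_n$ as in Lemma \ref{L6}. Since here $\lambda=0$ (equivalently all $\lambda_k=0$), equation \eqref{lcp for g} is exactly the iterative root equation \eqref{irp}, so these are solutions of \eqref{irp} on $\mathbb{R}_+$, and the asserted dependence on the $n-1$ homeomorphisms carries over verbatim.

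There is no genuine obstacle here; the content of the construction lives entirely in Theorem \ref{T3} and Lemma \ref{L6}. The only points deserving a word of care — both minor — are that $\eta=c$ is an endpoint of $J$, which is permitted because $S_{\eta,\lambda}$ is defined for any $\eta\in\bar{\mathbb{J}}$ and Theorem \ref{T3} is stated precisely for $\mathbb{J}=|c,d]$ (hence covers $[c,d]$), and that the boundedness of $G$ is deduced from the range hypothesis rather than assumed. Beyond the sign checks for \textbf{(B1)} and \textbf{(B2)}, no computation is needed.
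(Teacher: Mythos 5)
Your proposal is correct and follows exactly the paper's route: the paper's own proof is the one-line observation that $G\in S_{c,0}[\mathbb{R}_+;J]$ with $\mathcal{R}(G)=\mathcal{R}(G|_J)$, so Corollary~\ref{C2} follows from Theorem~\ref{T3} with $\lambda=0$. Your sign checks for \textbf{(B1)} and \textbf{(B2)} and the boundedness remark simply make explicit what the paper leaves implicit.
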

\begin{proof}
Follows from Theorem \ref{T3}, because $G\in S_{c,0}[\mathbb{R}_+; J]$ with $J=[c,d]$ such that $\mathcal{R}(F)=\mathcal{R}(F|_J)$.
\end{proof}

We have the following analogous result for the case $G(x)>x$, whose proof is similar.
\begin{coro}\label{C3}
		Let $G$ be a continuous function on $\mathbb{R}_+$ such that $G$ is strictly increasing on $\mathbb{J}$, $G(c)> c, G(d)= d, \mathcal{R}(G)=[G(c),d]$  and  $G(x)>x$ for $x\in (c,d)$, where $J=[c,d]$.  Then \eqref{irp} has solutions on $\mathbb{R}_+$.
		Moreover, each solution depends on $n-1$ arbitrarily chosen orientation-preserving homeomorphisms  $f_j:[x_{j-1},x_j]\to [x_j,x_{j+1}]$, $j=1, 2,\ldots,n-1$,  where
		$x_0=a$ and $x_1, x_2, \ldots, x_n$ are given as in Lemma \ref{L7}.
\end{coro}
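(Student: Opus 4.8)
The plan is to obtain Corollary \ref{C3} as an immediate consequence of Theorem \ref{T4}, exactly as Corollary \ref{C2} was obtained from Theorem \ref{T3}. Indeed, equation \eqref{irp} is precisely \eqref{lcp for g} in the degenerate case $\lambda_1=\cdots=\lambda_{n-1}=0$, so that $\lambda=\sum_{k=1}^{n-1}\lambda_k=0\in[0,1)$ and Theorem \ref{T4} is applicable provided its hypotheses are met with $\lambda=0$, $\eta=d$ and $\mathbb{J}=J=[c,d]$. Thus the whole task reduces to checking that the conditions imposed on $G$ in the statement are exactly a reformulation of ``$G\in S_{d,0}[\mathbb{R}_+;J]$ with $\mathcal{R}(G)=\mathcal{R}(G|_J)$''.

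First I would verify $G\in S_{d,0}[\mathbb{R}_+;J]$. Strict monotonicity of $G|_J$ is assumed outright. For condition \textbf{(B1)} with $\lambda=0$ and $\eta=d$, which reads $(G(x)-x)(d-x)>0$ for $x\ne d$, note that on $[c,d)$ one has $d-x>0$, while $G(c)>c$ and $G(x)>x$ for $x\in(c,d)$ give $G(x)-x>0$ there; hence the product is positive. For condition \textbf{(B2)}, namely $(G(x)-d)(d-x)<0$ for $x\ne d$, strict monotonicity of $G|_J$ together with $G(d)=d$ gives $G(x)<d$ on $[c,d)$, so $G(x)-d<0$ while $d-x>0$, and the product is negative. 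Next I would check the range hypothesis of Theorem \ref{T4}: since $G|_J$ is strictly increasing and continuous on $[c,d]$ with $G(c)>c$ and $G(d)=d$, we have $\mathcal{R}(G|_J)=[G(c),d]$, which by assumption equals $\mathcal{R}(G)$.

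With both requirements verified, Theorem \ref{T4} (with $\lambda=0$) yields solutions of \eqref{lcp for g}, that is of \eqref{irp}, in $S_{d,0}[\mathbb{R}_+;J]$, each determined by $n-1$ arbitrary orientation-preserving homeomorphisms $f_j:[x_{j-1},x_j]\to[x_j,x_{j+1}]$, $1\le j\le n-1$, where $x_0=a$ and $x_1,\ldots,x_n$ are furnished by Lemma \ref{L7} applied to $F_1=F|_\mathbb{I}$ with $F=\psi^{-1}\circ G\circ\psi$ and $\mathbb{I}=\log J$, as in the proof of Theorem \ref{T4} via Proposition \ref{P6}. Since this argument is just the proof of Corollary \ref{C2} with the two endpoints of $J$ interchanged, I do not anticipate any real obstacle; the only delicate point is bookkeeping, namely making sure the one-sided inequalities $G(x)>x$ and the endpoint values $G(c)>c$, $G(d)=d$ line up with \textbf{(B1)}, \textbf{(B2)} and with the class $S_{d,0}[\mathbb{R}_+;J]$ attached to the \emph{right} endpoint $\eta=d$ rather than the left endpoint used in Corollary \ref{C2}, and confirming that the stated range $\mathcal{R}(G)=[G(c),d]$ indeed forces $\mathcal{R}(G)=\mathcal{R}(G|_J)$.
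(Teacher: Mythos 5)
Your proposal is correct and matches the paper's own treatment: the paper proves Corollary \ref{C3} by remarking that it is the mirror image of Corollary \ref{C2}, i.e.\ it follows from Theorem \ref{T4} because the hypotheses force $G\in S_{d,0}[\mathbb{R}_+;J]$ with $\mathcal{R}(G)=\mathcal{R}(G|_J)$. Your explicit verification of \textbf{(B1)}, \textbf{(B2)} and the range condition simply spells out what the paper leaves implicit.
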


Theorems \ref{T3} and \ref{T4} each give infinitely many solutions of \eqref{lcp for g} on $\mathbb{R}_+$ since infinitely many choices can be made for the initial function $f_1, f_2, \ldots, f_{n-1}$ in Lemmas \ref{L6} and \ref{L7}. Similar conclusions hold for Corollaries  \ref{C2} and \ref{C3}.

Next, we consider the case that $\lambda \le 0$.
%
In 2013,  assuming that $\lambda \le 0$, Zhang et al \cite{Zhang-Xu-Zhang} proved the existence of continuous solutions for \eqref{lcp}
on the compact $I$.
In what follows,
solving \eqref{lcp} with $\lambda \le 0$ on the whole $\mathbb{R}$, we obtain solutions of \eqref{lcp for g} on $\mathbb{R}_+$.
For compact intervals $I=[a,b]$ and $J=[c,d]$ of $\mathbb{R}$ and $\mathbb{R}_+$
respectively, and for $\lambda \in \mathbb{R}$, let
\begin{eqnarray*}
&&\mathcal{A}_{\lambda}[\mathbb{R}; I]:= \{f\in \mathcal{C}_b(\mathbb{R}): f|_I~\text{is strictly increasing}, f(a)=\lambda a~\text{and}~f(b)=\lambda b\},
\\
&&\mathcal{B}_{\lambda}[\mathbb{R}_+; J]:= \{g\in \mathcal{C}_b(\mathbb{R}_+): g|_J~\text{is strictly increasing}, g(c)=c^\lambda ~\text{and}~g(d)=d^\lambda \}.
\end{eqnarray*}

\begin{pro}\label{P7}
	Let $\lambda \in \mathbb{R}$. Then $g\in 	\mathcal{B}_{\lambda}[\mathbb{R}_+; J]$ if and only if $f=\psi^{-1} \circ g \circ \psi \in 	\mathcal{A}_{\lambda}[\mathbb{R}; I]$, where $\psi(x)=e^x$ and $I=\log(J)$.
\end{pro}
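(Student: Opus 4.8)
The plan is to prove Proposition \ref{P7} in the same way as its companions, Propositions \ref{P3}, \ref{P5} and \ref{P6}: by transporting each defining property of $\mathcal{B}_{\lambda}[\mathbb{R}_+;J]$ across the conjugacy $f=\psi^{-1}\circ g\circ\psi$ and checking that it turns into the corresponding defining property of $\mathcal{A}_{\lambda}[\mathbb{R};I]$, and conversely. Since $\psi(x)=e^{x}$ is an increasing homeomorphism of $\mathbb{R}$ onto $\mathbb{R}_+$ whose inverse $\psi^{-1}=\log$ is again an increasing homeomorphism, the two implications are symmetric; I would therefore spell out the forward direction carefully and note that the converse is obtained simply by interchanging the roles of $\psi$ and $\psi^{-1}$.

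For the forward direction I would assume $g\in\mathcal{B}_{\lambda}[\mathbb{R}_+;J]$, set $I=\log J=[a,b]$ so that $a=\log c$ and $b=\log d$, and verify three things about $f=\psi^{-1}\circ g\circ\psi$. First, $f$ is continuous on $\mathbb{R}$ as a composition of continuous maps, and bounded by the argument already used in the proof of Proposition \ref{P5} (the range of $g$ lies in a bounded subinterval of $\mathbb{R}_+$), so $f\in\mathcal{C}_b(\mathbb{R})$. Second, $f|_I=\psi^{-1}\circ(g|_J)\circ\psi$ is a composition of strictly increasing maps, hence strictly increasing on $I$. Third, using $\log(t^{\lambda})=\lambda\log t$ for $t>0$,
\[
f(a)=\log g\bigl(e^{a}\bigr)=\log g(c)=\log\bigl(c^{\lambda}\bigr)=\lambda\log c=\lambda a,
\]
and likewise $f(b)=\log g(d)=\log(d^{\lambda})=\lambda b$, so $f\in\mathcal{A}_{\lambda}[\mathbb{R};I]$. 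For the converse I would start from $f\in\mathcal{A}_{\lambda}[\mathbb{R};I]$, put $g=\psi\circ f\circ\psi^{-1}$, and run the same three checks backwards, the endpoint computation now reading $g(c)=e^{f(\log c)}=e^{\lambda\log c}=c^{\lambda}$ and similarly $g(d)=d^{\lambda}$.

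I do not expect any real difficulty here; the statement is essentially bookkeeping, its only substantive content being that the affine boundary data $f(a)=\lambda a$, $f(b)=\lambda b$ on the logarithmic side correspond exactly to the power boundary data $g(c)=c^{\lambda}$, $g(d)=d^{\lambda}$ on the multiplicative side. The one place where a little care is needed — and which I would handle by pointing back to the proof of Proposition \ref{P5} — is the boundedness of $f$: membership of $g$ in $\mathcal{C}_b(\mathbb{R}_+)$ bounds $g$ only from above, while $\log$ is unbounded near $0$, so one must use that in the situation at hand the range of $g$ actually stays in a compact subinterval of $\mathbb{R}_+$ bounded away from $0$.
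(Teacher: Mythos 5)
Your proposal is correct and follows essentially the same route as the paper's own proof: transport each defining property across the conjugacy $f=\psi^{-1}\circ g\circ\psi$, obtaining strict monotonicity of $f|_I$ from that of $g|_J$ and the endpoint identities $f(a)=\log(c^{\lambda})=\lambda a$, $f(b)=\lambda b$, with the converse by symmetry. Your extra paragraph on the boundedness of $f$ is in fact more careful than the paper, whose proof of this proposition silently omits the membership $f\in\mathcal{C}_b(\mathbb{R})$ altogether; just note that, unlike $\mathcal{G}_J(\delta,M)$, the definition of $\mathcal{B}_{\lambda}[\mathbb{R}_+;J]$ imposes no condition $\mathcal{R}(g)\subseteq J$, so the range of $g$ being bounded away from $0$ is an implicit standing assumption of the paper rather than something you can import verbatim from the proof of Proposition \ref{P5}.
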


\begin{proof}
Let $g\in \mathcal{B}_{\lambda}[\mathbb{R}_+; J]$, where $\lambda \in \mathbb{R}$.
Since $f|_I=\psi^{-1}\circ (g|_J)\circ \psi$ and $g|_J$ is strictly increasing, clearly $f|_I$ is also strictly increasing. Also, $f(a)=\log g(e^a)=\log (g(c))=\log (c^\lambda)=\lambda \log c=\lambda a$ and similarly
$f(b)=\lambda b$. Hence $f\in 	\mathcal{A}_{\lambda}[\mathbb{R}; I]$. The converse follows similarly.
\end{proof}

\begin{lm}{\rm $($\cite{Zhang-Xu-Zhang}}, {\rm Corollary 1}$)$ \label{L8}
Let $\lambda \le 0$ and $F\in \mathcal{A}_{1-\lambda}[I; I]$, where $I=[a,b]$.
Then \eqref{lcp} has infinitely many solutions in $\mathcal{A}_{1}[I; I]$.
\end{lm}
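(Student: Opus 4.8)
The plan is simply to quote Corollary~1 of \cite{Zhang-Xu-Zhang}, after checking that the present notation matches it. The one point to verify is that the class $\mathcal{A}_{1-\lambda}[I;I]$ is precisely the admissible class of right-hand sides for \eqref{lcp}: if $f\in\mathcal{A}_1[I;I]$ is an order-preserving self-homeomorphism of $I=[a,b]$ fixing both endpoints, then so are all its iterates, and evaluating \eqref{lcp} at $a$ gives $a=f^n(a)=\sum_{k=1}^{n-1}\lambda_k a+F(a)=\lambda a+F(a)$, which forces $F(a)=(1-\lambda)a$; similarly $F(b)=(1-\lambda)b$. Thus the boundary conditions built into $\mathcal{A}_{1-\lambda}[I;I]$ are exactly the compatibility conditions and $\mathcal{A}_1[I;I]$ is the natural solution class. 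Unlike the technical lemmas discussed earlier, where the domain had to be adjusted from $\mathcal{C}(I,I)$ to $\mathcal{C}(\mathbb{R},I)$, here both the data and the solutions live on the \emph{compact} interval $I$ exactly as in the source, so nothing has to be re-proved.

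For completeness one would recall the idea of the construction in \cite{Zhang-Xu-Zhang}, which is in the same spirit as the proofs of Lemmas \ref{L6} and \ref{L7}: fix an interior point $x_0$, generate from it a monotone sequence $(x_m)$ exhausting $I$ through a recursion of the type \eqref{x_n}, prescribe $f$ arbitrarily as order-preserving homeomorphisms on the first $n-1$ blocks, and let \eqref{lcp} propagate $f$ to the remaining blocks via the recursive formula built from the composed inverses $f_{m-1}^{-1},\dots,f_{m-n+1}^{-1}$ and $F$. One then checks that the pieces glue together into an element of $\mathcal{A}_1[I;I]$; the $n-1$ free initial homeomorphisms already furnish infinitely many solutions. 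I would present this only in outline and refer to \cite{Zhang-Xu-Zhang} for the estimates that make the gluing go through.

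The delicate part of that argument --- and the reason $\lambda\le 0$ is imposed there rather than an arbitrary sign --- is that with possibly negative $\lambda_k$ the auxiliary linear combination $\lambda_{n-1}x+\lambda_{n-2}f^{-1}(x)+\cdots$ entering each recursive step need no longer be monotone, so it cannot be inverted as cleanly as in the case $\lambda\ge 0$ treated by \cite{Xu-Zhang}; one has to choose the partition and organize the bookkeeping of the composed inverses so that the recursion stays well defined and order-preserving on every block. None of this needs to be redone here: the only genuinely new step for the present development is \emph{afterwards}, namely extending such a solution of \eqref{lcp} from $I$ to the whole of $\mathbb{R}$ by the formula \eqref{extension} and then transporting it to $\mathbb{R}_+$ through Proposition \ref{P7}, in complete analogy with the proofs of Theorems \ref{T3} and \ref{T4}.
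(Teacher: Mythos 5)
Your main move --- stating the result as a quotation of Corollary~1 of \cite{Zhang-Xu-Zhang} after checking that the endpoint conditions $F(a)=(1-\lambda)a$ and $F(b)=(1-\lambda)b$ built into $\mathcal{A}_{1-\lambda}[I;I]$ are exactly the compatibility conditions obtained by evaluating \eqref{lcp} at the fixed endpoints of a solution in $\mathcal{A}_{1}[I;I]$ --- is precisely what the paper does: Lemma~\ref{L8} is cited without proof, and the paper only recounts the structure of the source's argument in Remark~\ref{remark1}. That part of your proposal is correct and matches the paper.

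However, your ``for completeness'' sketch describes the wrong construction. The orbit-based recursion you outline --- fix $x_0$, generate a monotone sequence $(x_m)$ by \eqref{x_n}, prescribe $f$ freely on the first $n-1$ blocks, and propagate by composed inverses --- is the mechanism of Lemmas~\ref{L6} and~\ref{L7}, taken from \cite{Xu-Zhang} for $\lambda\in[0,1)$, where $F$ is assumed to stay strictly on one side of the line $y=(1-\lambda)x$ away from a single endpoint. For $\lambda\le 0$ and $F\in\mathcal{A}_{1-\lambda}[I;I]$ there is no such one-sidedness hypothesis, and the argument of \cite{Zhang-Xu-Zhang} is structurally different: one first solves \eqref{lcp} for $F$ in the special classes $\mathcal{A}_{1-\lambda}^{\zeta}[I]\cup\mathcal{B}_{1-\lambda}^{\xi}[I]$ of functions linear near an endpoint, then handles a general one-sided $F$ by approximating it with a sequence $(F_m)$ of such functions and extracting a limit solution by sequential compactness, and finally decomposes $I$ along $\Gamma=\{x\in I: F(x)=(1-\lambda)x\}$ into maximal intervals on which $F$ is one-sided, solving on each and gluing with the identity on $\Gamma$. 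In particular, the infinitude of solutions comes from the freedom in the approximating sequences $(F_{j,m})$, not from $n-1$ free initial homeomorphisms; this is not a cosmetic point here, because Theorem~\ref{T5} explicitly parametrizes its solutions by those sequences. Your heuristic explanation of why $\lambda\le 0$ is imposed is speculative for the same reason. Since your actual proof is the citation, none of this invalidates the lemma, but the sketch should be replaced by (or checked against) the outline in Remark~\ref{remark1}.
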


\begin{rmk}\label{remark1}
{\rm	
The proof of the above lemma, seen in pp.82-89 of \cite{Zhang-Xu-Zhang},
shows steps to obtain those solutions:
\begin{description}
\item{\bf Step 1:}
 For each $\zeta, \xi \in (a,b)$ and $\lambda \le 0$,
let
\begin{eqnarray*}
\mathcal{A}_{\lambda}^\zeta[I]
&:=&\{f\in \mathcal{C}(I,\lambda I): f~\text{is strictly increasing on}~I, f(a)=\lambda a, f(b)=\lambda b,
\\
& &~~~~~~~~~~~~~~~~~~~~~ f(x)>\lambda x~\text{for}~x\in (a,b)~\text{and}~f~\text{is linear on}~[\zeta, b]\},
\\
\mathcal{B}_{\lambda}^\xi[I]
&:=&\{f\in \mathcal{C}(I,\lambda I): f~\text{is strictly increasing on}~I, f(a)=\lambda a, f(b)=\lambda b,
\\
& &~~~~~~~~~~~~~~~~~~~~~ f(x)<\lambda x~\text{for}~x\in (a,b)~\text{and}~f~\text{is linear on}~[a, \xi]\}.
\end{eqnarray*}
In this step, we construct solutions of \eqref{lcp} for $F\in \mathcal{A}_{1-\lambda}^\zeta[I]\cup\mathcal{B}_{1-\lambda}^\xi[I]$
	(see Theorem 1 in \cite{Zhang-Xu-Zhang}).
	This enables us to construct a sequence $(F_m)$ in $\mathcal{A}_{1-\lambda}^\zeta[I]\cup\mathcal{B}_{1-\lambda}^\xi[I]$ which converges to a given function $F$ of more general form and find the corresponding solutions $f_m$ for $m=1,2,\ldots$.

\item{\bf Step 2:}
Using the sequential compactness of $(f_m)$ and verifying that its limit $f$ is a solution of \eqref{lcp}, we arrive at the existence of solution of \eqref{lcp} for $F\in \mathcal{A}_{1-\lambda}[I]\cup\mathcal{B}_{1-\lambda}[I]$, where
\begin{eqnarray*}
\mathcal{A}_{\lambda}[I]
&:=&\{f\in \mathcal{C}(I,\lambda I): f~\text{is strictly increasing on}~I, f(a)=\lambda a,
\\
	& &~~~~~~~~~~~~~~~~~~~~ f(b)=\lambda b~\text{and}~f(x)>\lambda x~\text{for}~x\in (a,b)\},
\\
\mathcal{B}_{\lambda}[I]
&:=&\{f\in \mathcal{C}(I,\lambda I): f~\text{is strictly increasing on}~I, f(a)=\lambda a,
\\
& &~~~~~~~~~~~~~~~~~~~~~  f(b)=\lambda b~\text{and}~f(x)<\lambda x~\text{for}~x\in (a,b)\}
\end{eqnarray*}
for
 $\lambda \le 0$ (see Theorem 2 in \cite{Zhang-Xu-Zhang}).

\item{\bf Step 3:}
Dropping the assumption that location of $F$ is below or above the line $y=(1-\lambda)x$ made in $\mathcal{A}_{1-\lambda}[I]$ and
 $\mathcal{B}_{1-\lambda}[I]$, we obtain solutions of \eqref{lcp} for $F\in \mathcal{A}_{1-\lambda}[I; I]$ (see Corollary 1 in \cite{Zhang-Xu-Zhang}). In fact, given any $F\in \mathcal{A}_{1-\lambda}[I; I]$, let $\Gamma:=\{x\in I: F(x)=(1-\lambda)x\}$. Then $I=\Gamma \cup (\cup_jI_j)$ and $I_j$'s are disjoint open intervals, denoted by $(a_j, b_j)$'s, $a_j, b_j \in \Gamma$, such that $F(x)\ne (1-\lambda)x$ for $x\in (a_j, b_j)$. Then either $F_j\in \mathcal{B}_{1-\lambda}[I]$ or $F_j\in \mathcal{A}_{1-\lambda}[I]$, where $F_j:=F|_{I_j}$ for $j=1,2,\ldots$. By step 2, for each $j$ the equation
	\begin{eqnarray*}
	f^n(x)=\sum_{k=1}^{n-1}\lambda_k f^k(x)+F_j(x)
	\end{eqnarray*}
has a solution $f_j\in \mathcal{A}_1[I_j; I_j]$, which depends on the choice of a sequence $(F_{j,m})$ in $\mathcal{A}_{1-\lambda}^\zeta[I]\cup\mathcal{B}_{1-\lambda}^\xi[I]$. Then it follows that the function $f\in \mathcal{A}_1[I; I]$ defined by
	\begin{eqnarray*}
	f(x)=\left\{\begin{array}{cll}
	f_j(x)&\text{if}&x\in I_j,\\
	x&\text{if}&x\in \Gamma
	\end{array}\right.
	\end{eqnarray*}
is a solution of \eqref{lcp} on $I$.
\end{description}
Since infinitely many choices can be made for each of the sequences $(F_{j,m})$'s, Lemma \ref{L8} indeed gives infinitely many solutions of
\eqref{lcp} for $F\in \mathcal{A}_{1-\lambda}[I; I]$.
}
\end{rmk}

\begin{theorem}\label{T5}
		Let $\lambda \le 0$ and $G\in \mathcal{B}_{1-\lambda}[\mathbb{R}_+; J]$ such that $\mathcal{R}(G)=J^{1-\lambda}:=\{x^{1-\lambda}:x\in J\}$, where $J=[c,d]$. Then \eqref{lcp for g} has infinitely many solutions in $\mathcal{B}_{1}[\mathbb{R}_+; J]$. Moreover, each solution depends on the suitably chosen sequences  $(F_{j,m})$'s for $j=1,2,\ldots$
as indicated in the above Remark \ref{remark1}.
\end{theorem}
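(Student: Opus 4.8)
The plan is to mirror the proof of Theorem~\ref{T3}: transfer the problem from $\mathbb{R}_+$ to $\mathbb{R}$ via the exponential conjugacy, apply the known result (here Lemma~\ref{L8} together with Remark~\ref{remark1}) on the compact interval $I=\log J$, extend the solution obtained on $I$ to all of $\mathbb{R}$, and finally conjugate back. Concretely, given $G\in\mathcal{B}_{1-\lambda}[\mathbb{R}_+;J]$, set $F:=\psi^{-1}\circ G\circ\psi$ with $\psi(x)=e^x$ and $I=[a,b]=\log J$. By Proposition~\ref{P7} (applied with the parameter $1-\lambda$), $F\in\mathcal{A}_{1-\lambda}[\mathbb{R};I]$. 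The hypothesis $\mathcal{R}(G)=J^{1-\lambda}$ translates, under $\psi^{-1}$, into $\mathcal{R}(F)=(1-\lambda)I$, so in particular $\mathcal{R}(F)=\mathcal{R}(F|_I)$; hence $F_1:=F|_I\in\mathcal{A}_{1-\lambda}[I;I]$.

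Next I would invoke Lemma~\ref{L8}: since $\lambda\le 0$ and $F_1\in\mathcal{A}_{1-\lambda}[I;I]$, equation \eqref{lcp} has infinitely many solutions $\phi\in\mathcal{A}_1[I;I]$, each one determined by a suitable choice of the sequences $(F_{j,m})$ of Remark~\ref{remark1}. For any such $\phi$, define the extension $f:\mathbb{R}\to\mathbb{R}$ by
\begin{eqnarray*}
f(x)=\phi\circ F_1^{-1}\circ F(x),\quad x\in\mathbb{R},
\end{eqnarray*}
exactly as in \eqref{extension}. Here $F_1:I\to\mathcal{R}(F_1)$ is a strictly increasing continuous bijection, so $F_1^{-1}$ is well defined and continuous on $\mathcal{R}(F_1)=\mathcal{R}(F)$, which makes $f$ a well-defined continuous self-map of $\mathbb{R}$; since $\phi\in\mathcal{A}_1[I;I]$ we have $\mathcal{R}(f)\subseteq I$, $f|_I=\phi$ is strictly increasing, and $f(a)=\phi(a)=a$, $f(b)=\phi(b)=b$, so $f\in\mathcal{A}_1[\mathbb{R};I]$. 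The verification that $f$ solves \eqref{lcp} on all of $\mathbb{R}$ is the same chain of equalities used in Theorem~\ref{T3}: for $x\in\mathbb{R}$ one has $f(x)\in\mathcal{R}(f)=\mathcal{R}(\phi)\subseteq I$, so all further iterates of $f$ agree with iterates of $\phi=f|_I$, and one computes $f^n(x)-\sum_{k=1}^{n-1}\lambda_k f^k(x)=\phi^n(F_1^{-1}(F(x)))-\sum_{k=1}^{n-1}\lambda_k\phi^k(F_1^{-1}(F(x)))=F_1(F_1^{-1}(F(x)))=F(x)$, using that $F_1^{-1}(F(x))\in I$ and that $\phi$ solves \eqref{lcp} on $I$.

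Finally, by Propositions~\ref{P3} and~\ref{P7}, $g:=\psi\circ f\circ\psi^{-1}$ is a solution of \eqref{lcp for g} lying in $\mathcal{B}_1[\mathbb{R}_+;J]$, and distinct choices of $\phi$ (equivalently, of the sequences $(F_{j,m})$) yield distinct $f$ and hence distinct $g$, so there are infinitely many such solutions, each depending on the sequences $(F_{j,m})$ as in Remark~\ref{remark1}. The one point requiring genuine care — and the main obstacle — is checking that the conjugacy respects the parameter shift correctly: Proposition~\ref{P7} is stated for $\mathcal{B}_\lambda$ versus $\mathcal{A}_\lambda$, but Lemma~\ref{L8} takes its driving function from the class $\mathcal{A}_{1-\lambda}$, so one must apply Proposition~\ref{P7} with $\lambda$ replaced by $1-\lambda$ and confirm that $\mathcal{R}(G)=J^{1-\lambda}$ is exactly the condition that makes $\mathcal{R}(F)=\mathcal{R}(F|_I)$ hold, which is what lets $F_1^{-1}\circ F$ be defined on all of $\mathbb{R}$. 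Once that bookkeeping is in place, the remaining steps are routine adaptations of the $\lambda\ge 0$ case.
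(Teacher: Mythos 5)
Your proposal matches the paper's own proof essentially step for step: conjugate by $\psi(x)=e^x$ via Proposition~\ref{P7} to get $F\in\mathcal{A}_{1-\lambda}[\mathbb{R};I]$ with $\mathcal{R}(F)=(1-\lambda)I$, apply Lemma~\ref{L8} to $F_1=F|_I$, extend by $f=\phi\circ F_1^{-1}\circ F$ as in \eqref{extension}, verify the equation by the same chain of equalities as in Theorem~\ref{T3}, and conjugate back. The argument is correct and takes the same route; your explicit remark about the parameter shift $\lambda\mapsto 1-\lambda$ in Proposition~\ref{P7} and the injectivity of $\phi\mapsto\phi\circ F_1^{-1}\circ F$ are welcome clarifications of points the paper leaves implicit.
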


\begin{proof}
	Given $G\in \mathcal{B}_{1-\lambda}[\mathbb{R}_+; J]$, by Proposition \ref{P7}, we have $F=\psi^{-1}\circ G\circ \psi \in 	\mathcal{A}_{1-\lambda}[\mathbb{R}; I]$, where $\psi(x)=e^x$ and $I=\log(J)$. Also, since $\mathcal{R}(G)=J^{1-\lambda}$, we have $\mathcal{R}(F)=(1-\lambda)I$. So
	 $F_1:=F|_I\in \mathcal{A}_{1-\lambda}[I;I]$, and therefore by Lemma \ref{L8}, \eqref{lcp} has a solution $\phi_1$ in $\mathcal{A}_{1}[I;I]$. Let $f$ be the extension of $\phi_1$ to $\mathbb{R}$ as defined in \eqref{extension}. We prove that $f$ is a solution of \eqref{lcp} in $\mathcal{A}_{1}[\mathbb{R}_+; I]$.
Being a strictly increasing continuous map,
  $F_1:I \to (1-\lambda)I$ has the inverse $F_1^{-1}$, which is also strictly increasing and continuous on  $(1-\lambda)I$. Therefore,  as by assumption $\mathcal{R}(F)=\mathcal{R}(F_1)$, clearly $f$ is a well-defined map on $\mathbb{R}$. Also, $f$ is continuous on $\mathbb{R}$, being the composition of continuous maps $\phi_1, F_1^{-1}$ and $F$. Further,
		as $\phi_1 \in \mathcal{A}_{1}[I; I]$, it follows that $f|_I$ is strictly increasing, $f(a)=\lambda a$
		 and $f(b)=\lambda b$. Therefore $f\in \mathcal{A}_{1}[\mathbb{R}; I]$. Moreover, by a similar argument as in the proof of Theorem \ref{T3}, it can be shown that $f$ is a solution of \eqref{lcp} in $\mathcal{A}_{1}[\mathbb{R}; I]$. Hence by Propositions \ref{P3} and \ref{P7}, $g=\psi\circ f\circ \psi^{-1}$ is a solution of \eqref{lcp for g} in $\mathcal{B}_{1}[\mathbb{R}_+; J]$. Further, as indicated in Remark \ref{remark1}, construction of $\phi_1$ and hence that of $g$ depends on the choice of sequences  $(F_{j,m})$'s for $j=1,2,\ldots$.
\end{proof}

In the special case that $\lambda=0$, we have the following result for solutions of iterative root problem \eqref{irp} on $\mathbb{R}_+$.

\begin{coro}\label{C4}
	Let $G$ be a continuous function on $\mathbb{R}_+$ such that $G$ is strictly increasing on $J$, $G(c)=c$, $G(d)=d$ and $\mathcal{R}(G)=J$, where $J=[c,d]$. Then \eqref{irp} has infinitely many solutions on $\mathbb{R}_+$. Moreover, each solution depends on the suitably chosen sequences  $(F_{j,m})$'s for $j=1,2,\ldots$ as indicated in Remark \ref{remark1}.
\end{coro}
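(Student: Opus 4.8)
The plan is to obtain Corollary \ref{C4} as the degenerate case $\lambda_1=\cdots=\lambda_{n-1}=0$ of Theorem \ref{T5}. With this choice one has $\lambda:=\sum_{k=1}^{n-1}\lambda_k=0\le 0$, so Theorem \ref{T5} is applicable in principle, and equation \eqref{lcp for g} collapses to $g^n(x)=G(x)$, which is precisely \eqref{irp}. Thus it suffices to check that the hypothesis ``$G\in\mathcal{B}_{1-\lambda}[\mathbb{R}_+;J]$ with $\mathcal{R}(G)=J^{1-\lambda}$'' of Theorem \ref{T5} is met when $\lambda=0$, i.e.\ that $G\in\mathcal{B}_{1}[\mathbb{R}_+;J]$ and $\mathcal{R}(G)=J$.

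First I would verify membership in $\mathcal{B}_1[\mathbb{R}_+;J]$. Recall that $\mathcal{B}_\lambda[\mathbb{R}_+;J]=\{g\in\mathcal{C}_b(\mathbb{R}_+):g|_J\text{ strictly increasing},\ g(c)=c^\lambda,\ g(d)=d^\lambda\}$. By hypothesis $G$ is continuous on $\mathbb{R}_+$ with $\mathcal{R}(G)=[c,d]$; since this range is a bounded set, $G$ is bounded, hence $G\in\mathcal{C}_b(\mathbb{R}_+)$. The hypotheses that $G|_J$ is strictly increasing and that $G(c)=c=c^1$, $G(d)=d=d^1$ then give $G\in\mathcal{B}_1[\mathbb{R}_+;J]$. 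Finally $\mathcal{R}(G)=J=J^1=J^{1-0}$, so the range condition holds as well.

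With both hypotheses verified, Theorem \ref{T5} applied with $\lambda=0$ yields infinitely many solutions of \eqref{lcp for g}, i.e.\ of \eqref{irp}, in $\mathcal{B}_1[\mathbb{R}_+;J]$, each of which is built (via Propositions \ref{P3} and \ref{P7}) from the conjugated solution $\phi_1$ of \eqref{lcp} produced by Lemma \ref{L8}, and therefore depends on the suitably chosen sequences $(F_{j,m})$'s, $j=1,2,\ldots$, as described in Remark \ref{remark1}. The only point requiring a moment's care is the upgrade from ``$G$ continuous'' in the statement to ``$G\in\mathcal{C}_b(\mathbb{R}_+)$'' as demanded by $\mathcal{B}_1[\mathbb{R}_+;J]$, but this is immediate from $\mathcal{R}(G)=[c,d]$; so there is in fact no genuine obstacle here and the corollary is a direct specialization of Theorem \ref{T5}, exactly parallel to the way Corollaries \ref{C2} and \ref{C3} follow from Theorems \ref{T3} and \ref{T4}.
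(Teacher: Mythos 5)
Your proposal is correct and is essentially identical to the paper's own proof, which simply observes that $G\in\mathcal{B}_{1}[\mathbb{R}_+;J]$ with $\mathcal{R}(G)=J$ and invokes Theorem \ref{T5} with $\lambda=0$. Your extra check that $\mathcal{R}(G)=[c,d]$ forces boundedness (so $G\in\mathcal{C}_b(\mathbb{R}_+)$) is a harmless elaboration of the same argument.
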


\begin{proof}
	Follows from Theorem \ref{T5}, because $G\in \mathcal{B}_1[\mathbb{R}_+; J]$ such that $\mathcal{R}(G)=J$.
\end{proof}

By comparing the coefficients of $g^k$, $1\le k \le n$, in equations \eqref{lcp for g} and \eqref{1}, we have $\alpha_k=-\lambda_k$ for $1\le k\le n-1$ and $\alpha_n=1$.
Further, if $\lambda_k \in \mathbb{Z}$ for $1\le k \le n-1$, then the assumption that $\sum_{k=1}^{n}\alpha_k$ is odd,
made in Proposition \ref{P4},
demands that $1-\sum_{k=1}^{n-1}\lambda_k$ is odd, i.e., $\lambda$ is even.
Thus,  using Proposition \ref{P4}, we can indeed extend solutions of \eqref{1} on $\mathbb{R}_+$ to  $\mathbb{R}_-$ whenever $\lambda_k \in \mathbb{Z}$
for all $1\le k \le n-1$ such that $\lambda$ is even.


\section{Examples and Remarks}



\begin{exmp}
{\rm 	Consider the equation
	\begin{eqnarray} \label{Ex4}
		(g(x))^\frac{3}{4} (g^{2}(x))^\frac{1}{4}=G(x),
	\end{eqnarray}
where $G:\mathbb{R}_+\to \mathbb{R}_+$ is defined by
	\begin{equation*}
G(x)=\left\{\begin{array}{cl}
1& {\rm if}~x\in (0,1],\\
e^{(1+\log x)\log \sqrt x}& {\rm if}~ x\in [1,e],\\
e& {\rm if}~x\in [e, \infty).
\end{array}\right.
\end{equation*}
Let $f(x):=\log g(e^x)$ and $F(x):=\log G(e^x)$ for $x\in \mathbb{R}$. Then \eqref{Ex4} reduces to the polynomial-like equation
		\begin{eqnarray*}\label{Ex3}
	\frac{3}{4}	f(x)+\frac{1}{4}f^2(x)=F(x),
	\end{eqnarray*}
	where $F:\mathbb{R}\to \mathbb{R}$ is the map defined by
	\begin{equation*}
	F(x)=\left\{\begin{array}{cl}
	0& {\rm if}~x\le 0,\\
	\frac{x^2+x}{2}& {\rm if}~ x\in [0,1],\\
		1& {\rm if}~x\ge 1.
	\end{array}\right.
	\end{equation*}
Note that $F\in \mathcal{F}_I(\frac{1}{2}, \frac{3}{2})$, where $I=[0,1]$.	Let $\delta=\frac{2}{3}$ and $M=2$. Then $K_1\delta=\frac{1}{2}$, $K_0M=\frac{3}{2}$, and therefore $F\in \mathcal{F}_I(K_1\delta, K_0M)$. This implies by Proposition \ref{P5} that $G\in \mathcal{G}_J(K_1\delta, K_0M)$, where $J=[1,e]$.  Also, $K_2=\frac{1}{4}<\frac{11}{12}=K_0$. Thus, all the hypotheses of Theorem \ref{existance} are satisfied. Hence \eqref{Ex4} has a unique solution $g$ in $\mathcal{G}_J(\frac{2}{3}, 2)$. }
\end{exmp}


\begin{exmp}
{\rm	Consider the equation
	\begin{eqnarray}\label{19}
\dfrac{(g^2(x))^3}{(g(x))^2}=G(x),
	\end{eqnarray}
	where $G:\mathbb{R}_+\to \mathbb{R}_+$ is defined by
	\begin{eqnarray*}
	G(x)=\left\{\begin{array}{cl}
	1& {\rm if}~x\in (0,1],\\
\sqrt[3]{x}	& {\rm if}~ x\in [1,e],\\
e^\frac{1}{3\log x}	& {\rm if}~x\in [e, \infty).
	\end{array}\right.
	\end{eqnarray*}
	Let $f(x):=\log g(e^x)$ and $F(x):=\log G(e^x)$ for $x\in \mathbb{R}$. Then
	\eqref{19} reduces to
$	-2f(x)+3f^2(x)=F(x)$,
	which is equivalent to
	 \begin{eqnarray*}\label{21}
	 f^2(x)-\frac{2}{3}f(x)=\frac{1}{3}F(x),
	 \end{eqnarray*}
	 where  $F:\mathbb{R}\to \mathbb{R}$ is the map defined by
	 	\begin{eqnarray*}
	 	F(x)=\left\{\begin{array}{cll}
	 		0&\text{if}&x\le 0,\\
	 		\frac{x}{3}&\text{if}&x\in [0,1],\\
	 		\frac{1}{3x}&\text{if}&x\ge 1.
	 	\end{array}
	 	\right.
	 \end{eqnarray*}
 Note that $H:=\frac{1}{3}F\in R_{0, \frac{2}{3}}[\mathbb{R}; I]$, where $I=[0,1]$.
  Therefore by Proposition \ref{P6},  it follows that the map $G_1$ defined by  $G_1(x)=e^{H(\log x )}$ lies in $\mathcal{S}_{1,\frac{2}{3}}[\mathbb{R}_+; J]$,
  where $J=[1,e]$. Also, since $\mathcal{R}(H)=[0,\frac{1}{9}]=\mathcal{R}(H|_I)$,
  we have $\mathcal{R}(G_1)=[1,\sqrt[9]{e}]=\mathcal{R}(G_1|_J)$. Therefore, by Theorem \ref{T3},
  	\begin{eqnarray*}
  \dfrac{(g^2(x))}{(g(x))^\frac{2}{3}}=G_1(x),
  \end{eqnarray*}
  and hence \eqref{19} has a solution $g$ on $\mathbb{R}_+$.}
\end{exmp}

\begin{exmp}
	{\rm 	Consider the equation
			\begin{eqnarray}\label{23}
		(g^2(x))^3(g(x))^6=G(x),
		\end{eqnarray}
	where $G:\mathbb{R}_+\to \mathbb{R}_+$ is defined by
\begin{eqnarray*}
	G(x)=\left\{\begin{array}{cl}
		1& {\rm if}~x\in (0,1],\\
		x^3	& {\rm if}~ x\in [1,2],\\
		\frac{7x+2}{x}	& {\rm if}~x\in [2, \infty).
	\end{array}\right.
\end{eqnarray*}	
Then $G\in \mathcal{B}_3[\mathbb{R}_+; J]$, where $J=[1,2]$. Also, $\mathcal{R}(G)=J^3$. Hence by Theorem \ref{T5}, \eqref{23} has a solution in $\mathcal{B}_1[\mathbb{R}_+; J]$.   }
\end{exmp}


We make the following observations regarding the two approaches (i.e.,
using fixed point theorem and
constructing solutions piece by piece) considered to solve \eqref{1}.
First,
the solutions $g$ of \eqref{1} obtained in Theorems \ref{T3} and \ref{T4} have exactly one fixed point at an end-point of $\mathcal{R}(g)$,
whereas each solution $g$ obtained in Theorems \ref{existance} and \ref{T5} has fixed points at
both end-points of $\mathcal{R}(g)$.
Second,
as noted before, using Theorem \ref{existance}, we cannot solve iterative root problem \eqref{irp}.
On the other hand, we can indeed obtain solutions of \eqref{irp} using Corollaries \ref{C2}, \ref{C3} and \ref{C4}.

Additionally, we remind that in section 4 we did not complete our discussion for all $\lambda\in \mathbb{R}$, because we have assumed that $0\le \lambda <1$ in Theorems \ref{T3} and \ref{T4}. Remark that theses theorems are not necessarily valid for $\lambda\ge 1$ and therefore our current approach cannot be used in this case to solve \eqref{1} on $\mathbb{R}_+$. More precisely, if $\lambda\ge 1$, then the sets $S_{c, \lambda}[\mathbb{R}_+; \mathbb{J}]$ and $S_{d, \lambda}[\mathbb{R}_+; \mathbb{J}]$ are not necessarily nonempty.
In fact,  if $G\in S_{1, 3}[\mathbb{R}_+; [1,2]]$, then by using the conditions {\bf (B1)} and {\bf (B2)}
we have $1<G(2)<{1}/{4}$, which is a contradiction.
We arrive at a similar contradiction that $1<G(1)<1$ if  $G\in S_{2, 3}[\mathbb{R}_+; [1.2]]$.
So, both the sets $S_{1, 3}[\mathbb{R}_+; [1.2]]$ and $S_{2, 3}[\mathbb{R}_+; [1.2]]$ are empty.

Further, as observed at the end of section \ref{fp method} (resp. \ref{construction}), we can extend the solutions of \eqref{1} (resp. \eqref{lcp for g}) on $\mathbb{R}_+$ to $\mathbb{R}_-$ whenever $\alpha_k\in \mathbb{Z}$ for $1\le k\le n$ such that $\sum_{k=1}^{n}\alpha_k$ is odd (resp. whenever $\lambda_k \in \mathbb{Z}$ for $1\le k \le n-1$ such that $\lambda$ is even). On the other hand, if
 $\alpha_k\in \mathbb{R}\setminus \mathbb{Z}$ for some $1\le k \le n$, then for any $G, g\in \mathcal{C}_b(\mathbb{R}_-)$,  $x \mapsto \prod_{k=1}^{n}(g^k(x))^{\alpha_k}$ is a  multi-valued complex map, whereas $x \mapsto G(x)$ is a  single valued real map. So, in order to obtain the equality $\prod_{k=1}^{n}(g^k(x))^{\alpha_k}=G(x)$, we have to choose branches of the complex logarithm  suitably, which not only depends on $x$ but also on each term of the product $\prod_{k=1}^{n}(g^k(x))^{\alpha_k}$.  Therefore,  solving \eqref{1} on $\mathbb{R}_-$ in this case is very difficult.
For a similar reason,
solving \eqref{lcp for g} on $\mathbb{R}_-$ is difficult if $\lambda_k\in \mathbb{R}\setminus \mathbb{Z}$ for some $1\le k\le n-1$.


\section*{Acknowledgment}
The authors are listed in alphabetic order of their names and their contributions are treated equally.
The author Murugan Veerapazham is supported by SERB, DST, Government of India, through
$ECR/2017/000765$.
The author Weinian Zhang is supported by NSFC \# 11831012, \# 11771307 and \# 11821001.




\begin{thebibliography}{79}

\bibitem{Baron-Jarczyk}
K. Baron, W. Jarczyk, Recent results on functional equations in a single variable,
perspectives and open problems. {\it Aequationes Math.} {\bf 61} (2001), 1-48.	
	

\bibitem{greenfield}
S. J. Greenfield, R. D. Nussbaum, Dynamics of a quadratic map in two complex variables.
{\it J. Differential Equations}, {\bf 169} (2001), No.1, 57-141.

\bibitem{Kuczma1968}
M. Kuczma, {\it Functional Equations in a Single Variable}, Monografie Matematyczne Tom. {\bf 46}, PWN, Warsaw, 1968.

\bibitem{Kuczma}
M. Kuczma, B. Choczewski and R. Ger, {\it Iterative Functional Equations},
Encycl. Math. Appl. Vol.{\bf 32}, Cambridge Univ. Press, Cambridge, 1990.


%

\bibitem{Mc}
P. J. McCarthy,  The general exact bijective continuous solution of
{F}eigenbaum's functional equation, {\it Comm. Math. Phys.} {\bf 91} (1983), 431-443.

\bibitem{Mu-Su}
V. Murugan, P.V. Subrahmanyam, Special solutions of a general iterative functional equation,
{\it Aequationes Math.} {\bf 72} (2006), 269-287.

\bibitem{Mu-Su-2009}
V. Murugan, P.V. Subrahmanyam, Existence of continuous solutions for an iterative functional series equation with variable coefficients,
{\it Aequationes Math.} {\bf 78} (2009), 167--176.

\bibitem{Li-Zhang}
L. Li, W. Zhang, Construction of usc solutions for a multivalued iterative equation
of order $n$, {\it Result. Math.} {\bf 62} (2012), 203-216.


\bibitem{Si}
J. Si, Continuous solutions of iterative equation $G(f(x), f^{n_2}(x),..., f^{n_k}(x)) = F(x)$. {\it J.
	Math. Res. Exp.} {\bf 15} (1995), 149-150, in Chinese.


\bibitem{Targonski}
G. Targonski, {\it Topics in Iteration Theory}, Studia Mathematica Skript Vol. {\bf 6}, Vandenhoeck \& Ruprecht, Gottingen,
1981.

\bibitem{Wang-Si}
X. Wang, J. Si, Differentiable solutions of an iterated functional equation, {\it Aequationes Math.} {\bf 61} (2001), 79-96.

\bibitem{Si-Wang}
J. Si, X. Wang,  Differentiable solutions of a polynomial-like iterative equation with variable coefficients,
{\it Publ. Math. Debrecen} {\bf 58} (2001), 57-72.

\bibitem{Si-Zhang}
J. Si, W. Zhang,  {$C^2$} solutions of a functional equations,
{\it Acta Math. Sinica} {\bf 41} (1998), 1061--1064.

\bibitem{Xu-Zhang}
B. Xu, W. Zhang, Construction of continuous solutions and stability
for the polynomial-like iterative equation, {\it J. Math. Anal. Appl.} {\bf 325} (2007), 1160-1170.

\bibitem{Bing-Weinian}
B. Xu, W. Zhang, Decreasing solutions and convex solutions of the
polynomial-like iterative equation, {\it J. Math. Anal. Appl.} {\bf 329} (2007), 483-497.

\bibitem{Zdun-Zhang}
M. C. Zdun,  W. Zhang, A general class of iterative equations
on the unit circle, {\it Czech. 
Math. J.} {\bf 57} (132) (2007), 809-829.


\bibitem{zhang1988}
W. Zhang, Stability of the solution of the iterated equation
$\sum^n_{i=1}\lambda_i f(x)=F(x)$,
{\it Acta Math. Sci.} {\bf 8} (1988), 421--424.

\bibitem{zhang1989}
W. Zhang, On the differentiable solutions of the iterated equation
{$\sum^n_{i=1}\lambda_if^i(x)=F(x)$},
{\it Acta Math. Sinica} {\bf 32} (1989), 98--109, in Chinese.

\bibitem{zhang1990}
W. Zhang, Discussion on the differentiable solutions of the iterated equation  $\sum_{i=1}^{n}\lambda_if^i(x)=F(x)$,
{\it Nonlinear Anal.} {\bf 15} (1990), 387-398.



\bibitem{Zhang-Edinb}
W. Zhang,
Solutions of equivariance for a polynomial-like iterative equation, {\it  Proc. Roy. Soc. Edinb. A}{\bf 130}(2000), No.5, 1153-1163.

\bibitem{Zhang-Baker}
W. Zhang, J. A. Baker,  Continuous solutions of a polynomial-like iterative equation
with variable coefficients, {\it Ann. Polon. Math.} {\bf 73} (2000), 29-36.

\bibitem{Zhang-Xu-Zhang}
W. Zhang, B. Xu, W. Zhang, Global solutions for leading coefficient problem of polynomial-like iterative equations,
{\it Results Math.} {\bf 63} (2013), 79-93.







\end{thebibliography}
\end{document}